  \newtheorem{theoreme}{Th\'eor\`eme}
  \newtheorem*{theoreme*}{Th\'eor\`eme}
  \newtheorem{lemme}[theoreme]{Lemme}
  \newtheorem{proposition}[theoreme]{Proposition}
\newtheorem*{corollaire*}{Corollaire}
\newtheorem*{proposition*}{Proposition}
\theoremstyle{remark}
  \newtheorem*{remarque*}{Remarque}
\newcounter{ex}
\newenvironment{rem*}{
  \noindent\textbf{Remarque. }}{}
\newcommand{\Cc}{\mathbb{C}}
\newcommand{\Nn}{\mathbb{N}}
\newcommand{\Zz}{\mathbb{Z}}
\newcommand{\Pp}{\mathbb{P}}
\newcommand{\Mcal}{\mathcal{M}}
\newcommand{\Pcal}{\mathcal{P}}
\title{{\bf Un théorème de semi-continuité pour l'entropie des applications méromorphes}}
\author{Henry de Thélin}
\date{}
\begin{document}
\maketitle


\def\figurename{{Fig.}}%
\def\proofname{Preuve}
\def\contentsname{Sommaire}%

\begin{abstract}

Nous montrons un théorème de semi-continuité supérieure pour l'entropie métrique des applications méromorphes.

\end{abstract}

\selectlanguage{english}
\begin{center}
{\bf{ }}
\end{center}

\begin{abstract}

We prove a theorem of uppersemicontinuity for the metric entropy of meromorphic maps.

\end{abstract}

\selectlanguage{francais}

Mots-clefs: dynamique complexe, entropie.

Classification: 32U40, 32H50.

\section*{{\bf Introduction}}
\par

Soit $X$ une variété $C^{\infty}$ et $f: X \longrightarrow X$ une application $C^{\infty}$. On notera $\mathcal{M}$ l'ensemble des probabilités de $X$ invariantes par $f$. Pour $\mu \in \mathcal{M}$, $h_{\mu}(f)$ désignera son entropie métrique.

Newhouse a montré dans ce contexte (voir \cite{N1}) que l'application $\mu \longrightarrow h_{\mu}(f)$ est semi-continue supérieurement. Pour montrer ce résultat, il définit une notion d'entropie locale qui mesure en quelque sorte le défaut de semi-continuité supérieure, puis il montre que cette entropie est nulle en utilisant le théorème de Yomdin (voir \cite{Y}).

L'objectif de cet article est de généraliser ce résultat pour les applications méromorphes.

Soit donc $X$ une variété complexe compacte de dimension $k$ et $f: X \longrightarrow X$ une application méromorphe dominante. On notera $I$ l'ensemble d'indétermination de $f$.
 
Lorsqu'une probabilité $\mu$ ne charge pas $I$, on peut définir le poussé en avant $f_* \mu$, et on dira que $\mu$ est invariante par $f$ si $f_* \mu=\mu$. On notera $\mathcal{M}$ l'ensemble de ces mesures. 

Pour $\mu$ dans $\mathcal{M}$, on peut définir son entropie métrique $h_{\mu}(f)$. Le but de cet article est de donner des résultats de semi-continuité sur ces entropies métriques. L'application $f$ est holomorphe (donc $C^{\infty}$) en dehors de $I$: la difficulté viendra donc de la présence de l'ensemble d'indétermination $I$ sur lequel $f$ n'est même pas une application. Pour résoudre cette difficulté, l'idée est d'ajouter des hypothèses d'intégrabilité de la fonction $\log dist(x,I)$ pour les mesures que l'on considère. Cela permet d'avoir des estimées sur le comportement de $f$ près de $I$.

Plus précisément, le théorème principal est

\begin{theoreme}{\label{th}}

Soit $(\mu_n)$ une suite de $\mathcal{M}$ qui converge vers $\mu$ telle que

$$(H) \mbox{  :   }  \lim_{n \rightarrow + \infty}
\int \log dist(x,I)  d \mu_{n} (x) = \int \log dist(x,I)  d \mu(x) > - \infty.$$

Alors 

$$\limsup_{n \rightarrow + \infty} h_{\mu_n}(f) \leq h_{\mu}(f).$$

\end{theoreme}

Lorsque $f$ est holomorphe (c'est-à-dire $I= \emptyset$), on prend $dist(x,I)$ identiquement égal à $1$ dans l'énoncé de sorte que la condition $(H)$ se réduit au fait que la suite $(\mu_n)$  converge vers $\mu$.

L'idée pour démontrer le théorème est de construire des partitions adaptées aux applications méromorphes (comme dans \cite{dTV}). Ensuite on définira une notion d'entropie locale et enfin en utilisant la généralisation du théorème de Yomdin faite dans \cite{dTV}, nous verrons que cette entropie locale est nulle et nous obtiendrons le théorème.

\section{Partitions adaptées aux applications méromorphes et entropie métrique}

Dans un premier temps, nous rappelons la construction de partitions qui permettent de traiter l'entropie des applications méromorphes (voir \cite{dTV}).

Soit $\eta$ une fonction continue définie sur $X$ comprise entre $0$ et $1$ et qui vaut $0$ sur l'ensemble d'indétermination $I$ de $f$. On a (voir le lemme 2 de \cite{M} et le paragraphe 2.3.4 de \cite{dTV}):

\begin{proposition}{\label{partition}}(Ma{\~n}é)\\
On peut construire une partition dénombrable $\Pcal$ de $X \setminus
\{ \eta = 0 \}$ telle que:
\begin{enumerate}
\item Si  $x \in X \setminus \{ \eta =0 \}$, alors $\mbox{diam} \ \Pcal(x) <
   \eta(y)$ pour tout $y \in \Pcal(x)$ (ici $\Pcal(x)$ est l'atome de la partition qui contient $x$).

\item Pour toute probabilité $\nu$ telle que  $\int \log \eta(x) d \nu(x) > -
   \infty$, on  a $H_{\nu}(\Pcal) < + \infty$. Ici $H_{\nu}(\Pcal)$
désigne l'entropie métrique de la partition $\Pcal$ pour la mesure $\nu$.
\end{enumerate}
\end{proposition}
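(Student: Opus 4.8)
The plan is to construct $\mathcal{P}$ by slicing $X \setminus \{\eta = 0\}$ along the level sets of $\eta$ and subdividing each slice into a controlled number of small pieces. For $n \geq 1$ set
$$A_n = \Big\{ x \in X : \tfrac{1}{n+1} < \eta(x) \leq \tfrac{1}{n} \Big\}.$$
Since $0 \leq \eta \leq 1$, the Borel sets $(A_n)_{n \geq 1}$ are pairwise disjoint and their union is exactly $X \setminus \{\eta = 0\}$. Because $X$ is a compact manifold of real dimension $d$, there is a constant $C_0 > 0$ such that for every $n$ the manifold $X$ can be covered by at most $C_0 (n+1)^d$ balls of radius $\tfrac{1}{4(n+1)}$ (take a maximal $\tfrac{1}{4(n+1)}$-separated set and compare volumes). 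Intersecting such a cover with $A_n$ and disjointifying it in the usual way yields a finite partition of $A_n$ into at most $N_n := C_0 (n+1)^d$ atoms, each contained in a ball of radius $\tfrac{1}{4(n+1)}$, hence of diameter at most $\tfrac{1}{2(n+1)}$. Let $\mathcal{P}$ be the union over $n \geq 1$ of these partitions of the $A_n$; it is a countable partition of $X \setminus \{\eta = 0\}$.

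Property $(1)$ is then immediate: if $x \notin \{\eta = 0\}$, then $x$ belongs to a unique $A_n$ and $\mathcal{P}(x)$ is one of the atoms above, so $\mbox{diam}\,\mathcal{P}(x) \leq \tfrac{1}{2(n+1)}$, while every $y \in \mathcal{P}(x) \subseteq A_n$ satisfies $\eta(y) > \tfrac{1}{n+1} > \mbox{diam}\,\mathcal{P}(x)$.

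For property $(2)$, fix a probability $\nu$ with $\int \log \eta \, d\nu > -\infty$; in particular $\nu(\{\eta = 0\}) = 0$, so the numbers $a_n := \nu(A_n)$ satisfy $\sum_{n \geq 1} a_n = 1$. Splitting $H_\nu(\mathcal{P})$ into the contributions of the atoms contained in each $A_n$ and using the elementary bound "a sub-probability of total mass $a$ spread over $m$ sets contributes at most $a\log m - a\log a$ to the entropy", one gets
$$H_\nu(\mathcal{P}) \leq \sum_{n \geq 1} \big( a_n \log N_n - a_n \log a_n \big).$$
The first sum is at most $\log C_0 + d \sum_n a_n \log(n+1)$, and since $-\log \eta \geq \log n \geq 0$ on $A_n$ one has $\sum_n a_n \log n \leq \int (-\log \eta)\, d\nu < \infty$, which bounds it (using $\log(n+1) \le \log n + 1$). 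For the second sum, split the indices $n \geq 2$ according to whether $a_n \leq n^{-2}$, in which case $-a_n \log a_n \leq 2 n^{-2} \log n$, or $a_n > n^{-2}$, in which case $-\log a_n < 2\log n$ and so $-a_n \log a_n \leq 2 a_n \log n$; summing, $\sum_n (-a_n\log a_n) \leq 2\sum_n n^{-2}\log n + 2 \sum_n a_n \log n + O(1) < \infty$. Hence $H_\nu(\mathcal{P}) < \infty$.

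The only genuine difficulty is the convergence of $\sum_n (-a_n \log a_n)$: the hypothesis on $\nu$ supplies only $\sum_n a_n \log n < \infty$, which is much weaker than $\sum_n n\,a_n < \infty$, so one cannot simply compare the law $(a_n)$ with a geometric distribution; the dichotomy above (equivalently, a comparison with $b_n \asymp \big(n (\log n)^2\big)^{-1}$) is what makes the argument go through. The other point where the geometry is used is the polynomial bound $N_n \lesssim (n+1)^d$ on the number of small pieces, which rests on the compactness and the manifold structure of $X$.
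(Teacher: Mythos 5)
Your proof is correct, and its architecture is the one the paper (following Ma{\~n}\'e) uses: slice $X\setminus\{\eta=0\}$ along level sets of $\eta$, subdivide each slice into a polynomially controlled number of small atoms so that property (1) holds, and bound $H_{\nu}(\Pcal)$ by $\sum_n a_n\log N_n+\sum_n(-a_n\log a_n)$. The genuine difference is the slicing scale, and it changes which entropy lemma is needed. The paper takes exponential slices $V_n=\{e^{-(n+1)}<\eta\leq e^{-n}\}$ with atoms of diameter less than $r_n=e^{-(n+1)}$ and $N_n\leq C e^{2k(n+1)}$; since $-\log\eta\geq n$ on $V_n$, the hypothesis gives $\sum_n n\,\nu(V_n)\leq\int-\log\eta\,d\nu<+\infty$, and both $\sum_n\nu(V_n)\log N_n$ and $\sum_n-\nu(V_n)\log\nu(V_n)$ then follow at once from the classical fact that $\sum_n n\,a_n<\infty$ forces $\sum_n-a_n\log a_n<\infty$ (comparison with a geometric law). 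Your harmonic slices $\tfrac{1}{n+1}<\eta\leq\tfrac1n$ only extract the weaker information $\sum_n a_n\log n<\infty$, which is exactly why you need the dichotomy $a_n\leq n^{-2}$ versus $a_n>n^{-2}$ (i.e. comparison with $b_n\asymp(n(\log n)^2)^{-1}$); you identify this as the crux and handle it correctly, since $-t\log t$ is increasing near $0$ and $\sum_n n^{-2}\log n<\infty$, while the compensating gain is that $\log N_n\sim d\log n$ instead of $\sim 2kn$. So your version trades a cruder slicing for a sharper summability lemma. One caveat worth noting: the paper's exponential choice is not just a convenience for this proposition, because the specific structure of the $V_n$ (the bound $-\sum_j\log\eta\circ f^j\geq|s|$ on $W_s$ and the count $C_0^qe^{2k|s|}$ of atoms of $\bigvee_{i}f^{-i}\Pcal$ inside $W_s$) is exploited again in the proof of Proposition \ref{continuite}; your partition satisfies the present statement but would require reworking those later estimates.
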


Rappelons la forme des éléments de $\Pcal$ car cela nous sera utile.

Tout d'abord, il existe $C>0$ et $r_0>0$ telles que pour
 $0 < r \leq r_0$, on a une partition $\Pcal_r$ de $X$ dont les éléments ont un diamètre inférieur à $ r$ et telle que le nombre d'éléments de la partition $| \Pcal_r |$ est plus petit que $C(1/r)^{2k}$.

Maintenant, on définit $V_n:=\{x \mbox{, } e^{-(n+1)} < \eta(x) \leq
  e^{-n} \}$ pour $n \geq 0$. Comme la fonction $\eta$ est comprise entre $0$ et $1$, on a $X
\setminus \{ \eta = 0 \} = \cup_{n \geq 0} V_n$.

Soit $\Pcal$ la partition définie de la façon suivante: pour $n$ fixé,
on considère les ensembles $Q \cap V_n$ pour $Q \in \Pcal_{r_n}$ avec
$r_n = e^{-(n+1)}$. Cela définit une partition de $V_n$. Maintenant,
on obtient la partition $\Pcal$ de $X \setminus \{ \eta=0 \}$ en prenant tous les $n$ compris entre $0$ et $+ \infty$. On peut voir que cette partition vérifie les deux points de la proposition (voir le lemme 2 de \cite{M} et le paragraphe 2.3.4 de \cite{dTV}).

Dans la suite, on regardera cette partition restreinte à $\Omega=X \setminus \cup_{i \geq 0} f^{-i}(I)$ (que l'on notera toujours $\mathcal{P}$). L'avantage des partitions sur $\Omega$ c'est que les $f^{i}$ sont bien définis. En particulier, on peut définir la partition $f^{-i}(\Pcal)$: ses atomes sont les $f^{-i}(P):=\{ x \in \Omega \mbox{ avec } f^{i}(x) \in P \}$ où les $P$ sont les atomes de $\Pcal$. Comme $f(\Omega) \subset \Omega$, on obtient une partition de $\Omega$. Les mesures $\nu \in \mathcal{M}$ ont une masse $1$ sur $\Omega$: la partie de $X$ que l'on a enlevée est donc négligeable pour elles. 

On a maintenant le résultat de continuité suivant:

\begin{proposition}{\label{continuite}}

Soit $(\mu_n)$ une suite de $\Mcal$ qui converge vers $\mu$ telle que

$$\lim_{n \rightarrow + \infty} \int \log \eta(x) d \mu_n(x)=\int \log \eta(x) d \mu(x) > - \infty.$$

Si $\mu$ ne charge pas les bords de la partition $\Pcal$, alors pour tout $q \geq 1$

$$\lim_{n \rightarrow + \infty} H_{\mu_n} \left(\bigvee_{i=0}^{q-1} f^{-i} \Pcal \right)=H_{\mu}\left(\bigvee_{i=0}^{q-1} f^{-i} \Pcal \right).$$

\end{proposition}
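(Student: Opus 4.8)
Le plan est de suivre la preuve classique de la continuit\'e de l'entropie d'une partition fix\'ee sous convergence faible des mesures, l'unique difficult\'e \'etant le contr\^ole \emph{uniforme en $n$} de la queue de la s\'erie d\'efinissant $H_\nu(\Ccal)$, o\`u $\Ccal:=\bigvee_{i=0}^{q-1}f^{-i}\Pcal$\,; ce contr\^ole reposera sur l'hypoth\`ese d'int\'egrabilit\'e de $\log\eta$ et sur l'invariance des mesures. On pose $\phi:=-\log\eta$, fonction semi-continue inf\'erieurement sur $X$ \`a valeurs dans $[0,+\infty]$, finie $\mu_n$-p.p. et $\mu$-p.p., et $\psi:=\sum_{i=0}^{q-1}\phi\circ f^i$ sur $\Omega$ (quitte \`a retirer un nombre fini de termes on suppose $\int\phi\,d\mu_n<+\infty$ pour tout $n$).

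\emph{\'Etape 1 (int\'egrabilit\'e uniforme).} Je montrerais d'abord que $\{\phi\}$ est uniform\'ement int\'egrable pour $\{\mu_n\}$. Comme $\phi_N:=\min(\phi,N)$ est continue born\'ee sur $X$, la convergence faible donne $\int\phi_N\,d\mu_n\to\int\phi_N\,d\mu$, et avec $\int\phi\,d\mu_n\to\int\phi\,d\mu<+\infty$ on obtient $\int(\phi-\phi_N)\,d\mu_n\to\int(\phi-\phi_N)\,d\mu$ pour chaque $N$\,; on conclut par l'argument usuel (choisir $N$ rendant $\int(\phi-\phi_N)\,d\mu$ petit, puis traiter \`a part les premiers indices). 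Comme les $\mu_n$ sont invariantes, $\int_{\{\phi\circ f^i\geq N\}}\phi\circ f^i\,d\mu_n=\int_{\{\phi\geq N\}}\phi\,d\mu_n$, donc chaque $\phi\circ f^i$, et partant $\psi$, est aussi uniform\'ement int\'egrable pour $\{\mu_n\}$, et $C_0:=\sup_n\int\psi\,d\mu_n<+\infty$.

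\emph{\'Etape 2 (d\'ecoupage par niveaux et queue uniforme).} Tout atome de $\Ccal$ s'\'ecrit $A=\bigcap_{i=0}^{q-1}f^{-i}(P_i)$ avec $P_i\in\Pcal$ contenu dans un certain $V_{n_i}$\,; je pose $c(A):=n_0+\dots+n_{q-1}$ et $W_m:=\bigcup_{c(A)=m}A$. Comme $f^i(x)\in V_{n_i}$ entra\^ine $\phi(f^i(x))\geq n_i$, on a $\sum_m m\,\mathbf 1_{W_m}\leq\psi$, donc $\bigcup_{m\geq M}W_m\subset\{\psi\geq M\}$ et $\sum_{m\geq M}m\,\nu(W_m)\leq\int_{\{\psi\geq M\}}\psi\,d\nu$. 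D'autre part le nombre $N_m$ d'atomes de niveau $m$ v\'erifie $N_m\leq(m+1)^{q-1}C^q e^{2k(m+q)}$ (au plus $(m+1)^{q-1}$ tuples $(n_i)$ de somme $m$, et au plus $\prod_i C(1/r_{n_i})^{2k}$ choix des $P_i$), d'o\`u $\log N_m\leq 2km+D\log(m+2)$ pour une constante $D=D(k,q,C)$. En regroupant les atomes de niveau $m$ dans $W_m$ (l'entropie d'une partition de $W_m$ en au plus $N_m$ parties est $\leq\log N_m$), on a $\sum_{c(A)=m}(-\nu(A)\log\nu(A))\leq-\nu(W_m)\log\nu(W_m)+\nu(W_m)\log N_m$\,; de plus, selon que $\nu(W_m)\leq e^{-m}$ ou non, $-\nu(W_m)\log\nu(W_m)\leq m\,e^{-m}+m\,\nu(W_m)$. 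En sommant sur $m\geq M\geq 1$ il vient, pour toute $\nu\in\{\mu_n\}\cup\{\mu\}$,
$$\sum_{c(A)\geq M}\bigl(-\nu(A)\log\nu(A)\bigr)\ \leq\ \sum_{m\geq M}m\,e^{-m}\ +\ (2k+D+1)\Bigl(\int_{\{\psi\geq M\}}\psi\,d\nu+\nu(\{\psi\geq M\})\Bigr)=:\alpha_M(\nu),$$
et gr\^ace \`a l'\'Etape 1 et \`a $\nu(\{\psi\geq M\})\leq C_0/M$, on a $\sup_n\alpha_M(\mu_n)\to 0$ et $\alpha_M(\mu)\to 0$ quand $M\to+\infty$.

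\emph{\'Etape 3 (conclusion).} Comme $f^i$ est holomorphe sur l'ouvert $X\setminus\bigcup_{j=0}^{q-1}f^{-j}(I)$, le bord de $A$ (apr\`es identification modulo l'ensemble $\mu$-n\'egligeable $\bigcup_{j=0}^{q-1}f^{-j}(I)$) est contenu dans $\bigcup_{j=0}^{q-1}f^{-j}(I)\cup\bigcup_{i=0}^{q-1}f^{-i}(\partial\Pcal)$, qui est $\mu$-n\'egligeable puisque $\mu$ est invariante, ne charge pas $I$ et ne charge pas $\partial\Pcal$\,; donc $\mu_n(A)\to\mu(A)$ pour chaque atome $A$. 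Les atomes de niveau $<M$ \'etant en nombre fini et $t\mapsto-t\log t$ \'etant continue sur $[0,1]$, on en d\'eduit $\sum_{c(A)<M}(-\mu_n(A)\log\mu_n(A))\to\sum_{c(A)<M}(-\mu(A)\log\mu(A))$. En \'ecrivant $H_\nu(\Ccal)=\sum_{c(A)<M}(\cdots)+\sum_{c(A)\geq M}(\cdots)$ et en utilisant l'\'Etape 2, on obtient $\limsup_n H_{\mu_n}(\Ccal)\leq H_\mu(\Ccal)+\sup_n\alpha_M(\mu_n)$ et $\liminf_n H_{\mu_n}(\Ccal)\geq\sum_{c(A)<M}(-\mu(A)\log\mu(A))$\,; en faisant $M\to+\infty$ les deux membres tendent vers $H_\mu(\Ccal)$, ce qui donne $\lim_n H_{\mu_n}(\Ccal)=H_\mu(\Ccal)$. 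Le point d\'elicat est l'\'Etape 1 : c'est l\`a que la convergence des int\'egrales $\int\log\eta\,d\mu_n$, l'invariance des mesures et la semi-continuit\'e de $-\log\eta$ se combinent pour fournir l'int\'egrabilit\'e uniforme dont d\'ecoule le contr\^ole de la queue.
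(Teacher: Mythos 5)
Votre preuve est correcte et suit pour l'essentiel la m\^eme d\'emarche que celle de l'article : votre d\'ecoupage par niveaux $c(A)=m$ correspond au d\'ecoupage en $W_s$ (avec $|s|=m$) du texte, et votre \'etape d'int\'egrabilit\'e uniforme de $-\log\eta$ (troncature, convergence faible, invariance pour les termes crois\'es) est exactement le contenu du lemme \ref{estimation_integrale}, qui y joue le m\^eme r\^ole de contr\^ole uniforme de la queue. La seule diff\'erence est cosm\'etique : vous traitez directement le terme $-\nu(W_m)\log\nu(W_m)$ par la dichotomie selon que $\nu(W_m)\leq e^{-m}$ ou non, l\`a o\`u l'article invoque le lemme 2.3.5 de \cite{dTV}.
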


Pour démontrer cette proposition, nous aurons besoin du lemme suivant:

\begin{lemme}\label{estimation_integrale}
Sous les mêmes hypothèses que dans la proposition précédente, pour $i,j=0 , \dots , q-1$, on a 
$$ 0 \geq \int_{ \{ \eta \circ f^{i}  \leq \epsilon \} } \log \eta \circ f^j \mbox{ } d \mu_n \geq - \delta(\epsilon)$$
si $n$ est assez grand. Ici $\delta( \epsilon)$ tend vers $0$
quand $\epsilon$ converge vers $0$.

De plus, on a le même résultat avec $\mu$ à la place de $\mu_n $.

\end{lemme}

\begin{proof}

On fait la preuve pour $\mu_n$. C'est essentiellement la même pour $\mu$.

Tout d'abord montrons le lemme pour $i=j=0, \dots , q-1$. On a grâce à l'invariance de $\mu_n$

$$\int_{ \{ \eta \circ f^{j}  \leq \epsilon \} } \log \eta \circ f^j(x) \mbox{ } d \mu_n(x)=\int_{ \{ \eta \leq \epsilon \} } \log \eta(x) d \mu_n(x).$$

Quitte à prendre $\epsilon$ générique de sorte que $\mu$ ne charge pas $\{ \eta= \epsilon \}$, la dernière intégrale converge vers $\int_{ \{ \eta \leq \epsilon \} } \log \eta(x) d \mu(x)$. En effet, d'une part l'hypothèse implique 

$$\lim_{n \rightarrow + \infty} \int \log \eta(x) d \mu_n(x)=\int \log \eta(x) d \mu(x)$$

et d'autre part, $\int_{ \{ \eta > \epsilon \} } \log \eta(x) d \mu_n(x)$ converge vers $\int_{ \{ \eta > \epsilon \} } \log \eta(x) d \mu(x)$.

Par convergence dominée,  $\int_{ \{ \eta \leq \epsilon \} } \log \eta(x) d \mu(x)$ tend vers $0$ quand $\epsilon$ converge vers $0$. On obtient ainsi l'existence d'une fonction $\delta'(\epsilon)$ avec $\lim_{\epsilon \rightarrow 0} \delta'(\epsilon)=0$ telle que:

$$0 \geq \int_{ \{ \eta \circ f^{j}  \leq \epsilon \} } \log \eta \circ f^j(x) \mbox{ } d \mu_n(x) \geq - \delta'(\epsilon)$$

pour $n$ grand.

Maintenant si $i,j=0, \dots ,q-1$, on coupe l'intégrale

$$  \int_{ \{ \eta \circ f^{i}  \leq \epsilon \} } \log \eta \circ f^j(x) \mbox{ } d \mu_n(x)$$

en deux parties

\begin{equation*}
\begin{split}
&\int_{ \{ \eta \circ f^{i}  \leq \epsilon  \} \cap \{\eta \circ f^{j}  \leq  \delta'(\epsilon) \}  } \log
\eta \circ f^{j}(x)  \mbox{ } d \mu_n(x)\\
&+ \int_{ \{ \eta \circ f^{i}   \leq \epsilon \} \cap \{\eta \circ f^{j}  > \delta'(\epsilon) \} } \log
\eta \circ f^{j}(x)   \mbox{ } d \mu_n(x).
\end{split}
\end{equation*}
Le premier terme est plus grand que
$$\int_{  \{ \eta \circ f^{j}  \leq  \delta'(\epsilon) \}  } \log
\eta \circ f^{j}(x)   \mbox{ } d \mu_n(x)$$
et ce terme est minoré par $- \delta'(\delta'(\epsilon))$ si $n$ est assez grand par la première partie de la preuve. Cette quantité converge vers $0$ quand $\epsilon$ tend vers $0$.

Le second terme est plus grand que (si $\epsilon \leq e^{-1}$ et $\delta'(\epsilon) \leq 1$)
\begin{equation*}
\begin{split}
& \int_{ \{ \eta \circ f^{i} \leq \epsilon  \} } \log \delta'(\epsilon)  \mbox{ } d \mu_n(x)\\
& \geq  - \log \delta'(\epsilon) \int_{ \{ \eta \circ f^{i} \leq \epsilon \} } \log \eta \circ f^{i}(x)  \mbox{ } d \mu_n(x) \geq \delta'(\epsilon) \log \delta'(\epsilon)
\end{split}
\end{equation*}
si $n$ est suffisamment grand en utilisant le début de la preuve. Cette quantité converge aussi vers $0$ quand $\epsilon$ tend vers $0$.

\end{proof}

Passons maintenant à la démonstration de la proposition.

\begin{proof}

Pour un multi-indice $s=(s_0, \dots , s_{q-1}) \in \Nn^q$, on note $W_s= V_{s_0} \cap \dots \cap f^{-q+1} V_{s_{q-1}}$. Lorsque $P \in \bigvee_{i=0}^{q-1} f^{-i} \Pcal$, alors $P$ est dans un des $W_s$ pour un certain multi-indice $s$ par construction de la partition $\Pcal$. De plus, le nombre de $P \in \bigvee_{i=0}^{q-1} f^{-i} \Pcal$ qui sont dans $W_s$ est majoré par $C^q e^{2kq} e^{2k |s|}=C_0^q e^{2k |s|}$ avec $|s|= s_0 + \dots + s_{q-1}$.

Maintenant, pour $A \in \Nn^*$, on a
$$H_{\mu_n} \left(\bigvee_{i=0}^{q-1} f^{-i} \Pcal \right)= \sum_{s \in \Nn^q} \sum_{P \in \bigvee_{i=0}^{q-1} f^{-i} \Pcal \mbox{, } P \subset  W_s} - \mu_n(P) \log \mu_n(P)$$
que l'on divise en:
$$\sum_{|s| \geq A} \sum_{P \in \bigvee_{i=0}^{q-1} f^{-i} \Pcal \mbox{, } P \subset  W_s} - \mu_n(P) \log \mu_n(P) + \sum_{|s| <A} \sum_{P \in \bigvee_{i=0}^{q-1} f^{-i} \Pcal \mbox{, } P \subset  W_s} - \mu_n(P) \log \mu_n(P).$$

On commence avec le premier terme. En utilisant l'inégalité

$$- \sum_{i=1}^{m_0} x_i \log x_i \leq \left( \sum_{i=1}^{m_0} x_i \right)
\left( \log m_0 - \log \sum_{i=1}^{m_0} x_i \right)$$

on a

\begin{equation*}
\begin{split}
&R_1(A)=\sum_{|s| \geq A} \sum_{P \in \bigvee_{i=0}^{q-1} f^{-i} \Pcal \mbox{, } P \subset  W_s} - \mu_n(P) \log \mu_n(P) \\
&\leq \sum_{|s| \geq A}  \mu_n(W_s) ( \log ( \# \{ P \in \bigvee_{i=0}^{q-1} f^{-i} \Pcal \mbox{ , } P \subset W_s \}) - \log \mu_n(W_s))\\
&\leq \log C_0^q \sum_{|s| \geq A}  \mu_n(W_s) + 2k \sum_{|s| \geq A} |s|  \mu_n(W_s) + \sum_{|s| \geq A}  \mu_n(W_s) \log \frac{1}{\mu_n(W_s) }.\\
 \end{split}
\end{equation*}
Maintenant, en utilisant le lemme 2.3.5 de \cite{dTV}, on obtient 
$$R_1(A) \leq C(q) e^{- \frac{A}{2q}} +(\log C_0^q  + 2k +1) \sum_{|s| \geq A} |s|  \mu_n(W_s).$$

Sur $W_s$, on a  $- \sum_{j=0}^{q-1} \log \eta \circ f^j \geq |s|$. Par ailleurs les $W_s$ sont deux à deux disjoints et
$$\cup_{|s| \geq A} W_s \subset \cup_{i=0}^{q-1} \cup_{ \{ s \in \Nn^q \mbox{ , } s_i \geq \frac{A}{q} \}} W_s \subset  \cup_{i=0}^{q-1} \{ \eta \circ f^{i} \leq e^{- \frac{A}{q}} \},$$
d'où,
\begin{equation*}
\begin{split}
&R_1(A) \leq C(q) e^{- \frac{A}{2q}} +(\log C_0^q  + 2k +1) \sum_{j=0}^{q-1} \sum_{|s| \geq A} \int_{W_s} -  \log \eta \circ f^j(x) d \mu_n(x)\\
&\leq C(q) e^{- \frac{A}{2q}} +(\log C_0^q  + 2k +1) \sum_{j=0}^{q-1} \sum_{i=0}^{q-1} \int_{  \{ \eta \circ f^{i}(x) \leq e^{- \frac{A}{q}} \} } -  \log \eta \circ f^j(x) d \mu_n(x)\\
 \end{split}
\end{equation*}
qui est aussi petit que l'on veut si on prend $A$ grand puis $n$ grand par le lemme précédent. La même chose est vraie si on remplace $\mu_n$ par $\mu$ dans $R_1(A)$: c'est aussi petit que l'on veut si on prend $A$ grand par le lemme précédent.

On considère maintenant le second terme
$$ \sum_{|s| <A} \sum_{P \in \bigvee_{i=0}^{q-1} f^{-i} \Pcal \mbox{, } P \subset  W_s} - \mu_n(P) \log \mu_n(P).$$
Comme $\mu$ n'a pas de masse sur les bords de la partition cela converge vers
$$ \sum_{|s| <A} \sum_{P \in \bigvee_{i=0}^{q-1} f^{-i} \Pcal \mbox{, } P \subset  W_s} - \mu(P) \log \mu(P),$$
quand $n$ tend vers l'infini car il n'y a qu'un nombre fini d'éléments et $\mu_n$ converge vers $\mu$.

En conclusion, on a bien que $H_{\mu_n}(\bigvee_{i=0}^{q-1} f^{-i} \Pcal)$ qui converge
vers $H_{\mu}(\bigvee_{i=0}^{q-1} f^{-i} \Pcal)$ quand $n$ tend vers l'infini.

\end{proof}

De cette proposition, on en déduit la semi-continuité suivante:

\begin{proposition}{\label{semicontinue}}

Soit $(\mu_n)$ une suite de $\Mcal$ qui converge vers $\mu$ telle que

$$\lim_{n \rightarrow + \infty} \int \log \eta(x) d \mu_n(x)=\int \log \eta(x) d \mu(x) > - \infty.$$

Si $\mu$ ne charge pas les bords de la partition $\Pcal$, alors 

$$\limsup_{n \rightarrow \infty} h_{\mu_n}(f, \Pcal) \leq h_{\mu}(f, \Pcal).$$

\end{proposition}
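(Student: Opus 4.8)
L'idée est de ramener cette semi-continuité à la continuité des entropies des partitions finies $\bigvee_{i=0}^{q-1} f^{-i}\Pcal$ établie dans la Proposition \ref{continuite}, via l'argument de sous-additivité usuel. Je commencerais par rappeler que, pour toute mesure $\nu \in \Mcal$, la suite $a_q(\nu) := H_\nu\left(\bigvee_{i=0}^{q-1} f^{-i}\Pcal\right)$ est sous-additive : $\nu$ étant invariante, on a $\nu(f^{-p}A) = \nu(A)$ pour tout atome $A$ et tout $p \geq 0$, d'où $H_\nu(f^{-p}\alpha) = H_\nu(\alpha)$ pour toute partition $\alpha$, et donc $a_{p+q}(\nu) \leq a_p(\nu) + H_\nu\big(f^{-p}\bigvee_{i=0}^{q-1}f^{-i}\Pcal\big) = a_p(\nu) + a_q(\nu)$. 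Comme $a_1(\nu) = H_\nu(\Pcal) < +\infty$ dès que $\int \log\eta \, d\nu > -\infty$ (Proposition \ref{partition}, point 2) -- ce qui vaut pour $\mu$ et, grâce à l'hypothèse de convergence, pour $\mu_n$ dès que $n$ est grand -- le lemme de Fekete donne $h_\nu(f,\Pcal) = \lim_q \frac 1q a_q(\nu) = \inf_{q \geq 1} \frac 1q a_q(\nu)$.

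Ensuite, je fixerais $q \geq 1$. Pour tout $n$ assez grand on a alors $h_{\mu_n}(f,\Pcal) \leq \frac 1q H_{\mu_n}\big(\bigvee_{i=0}^{q-1} f^{-i}\Pcal\big)$. Puisque $\mu$ ne charge pas les bords de $\Pcal$, la Proposition \ref{continuite} s'applique et donne $H_{\mu_n}\big(\bigvee_{i=0}^{q-1} f^{-i}\Pcal\big) \rightarrow H_{\mu}\big(\bigvee_{i=0}^{q-1} f^{-i}\Pcal\big)$ quand $n \rightarrow +\infty$. En passant à la limite supérieure dans l'inégalité précédente, j'obtiens $\limsup_{n} h_{\mu_n}(f,\Pcal) \leq \frac 1q H_{\mu}\big(\bigvee_{i=0}^{q-1} f^{-i}\Pcal\big)$. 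Cette inégalité étant valable pour tout $q \geq 1$, il ne reste plus qu'à prendre l'infimum sur $q$ au membre de droite : $\limsup_{n} h_{\mu_n}(f,\Pcal) \leq \inf_{q \geq 1}\frac 1q H_{\mu}\big(\bigvee_{i=0}^{q-1} f^{-i}\Pcal\big) = h_{\mu}(f,\Pcal)$, ce qui est la conclusion.

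La démonstration ne présente pas de difficulté substantielle : le travail essentiel (le contrôle uniforme en $n$ des queues $\sum_{|s| \geq A}$, grâce aux hypothèses d'intégrabilité de $\log\eta$) a déjà été fait dans la Proposition \ref{continuite} et le Lemme \ref{estimation_integrale}. Le seul point auquel il faut prendre garde est la finitude de $H_{\mu_n}(\Pcal)$ pour $n$ grand -- nécessaire pour légitimer la sous-additivité et donc l'écriture de $h_{\mu_n}(f,\Pcal)$ comme infimum --, qui résulte de la Proposition \ref{partition} combinée à l'hypothèse $\lim_n \int \log\eta\, d\mu_n = \int \log\eta\, d\mu > -\infty$. À noter aussi que, $\mu$ étant invariante, le fait qu'elle ne charge pas les bords de $\Pcal$ entraîne qu'elle ne charge pas ceux de $\bigvee_{i=0}^{q-1} f^{-i}\Pcal$, de sorte que l'application de la Proposition \ref{continuite} est bien licite.
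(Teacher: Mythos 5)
Votre démonstration est correcte et suit essentiellement la même route que celle de l'article : fixer $q$, utiliser la Proposition \ref{continuite} pour passer à la limite dans $\frac{1}{q} H_{\mu_n}\bigl(\bigvee_{i=0}^{q-1} f^{-i}\Pcal\bigr)$, puis invoquer la caractérisation $h_{\nu}(f,\Pcal)=\inf_{q}\frac{1}{q}H_{\nu}\bigl(\bigvee_{i=0}^{q-1}f^{-i}\Pcal\bigr)$. Les précisions que vous ajoutez (sous-additivité via l'invariance, finitude de $H_{\mu_n}(\Pcal)$ pour $n$ grand, bords de la partition raffinée) sont des justifications pertinentes que l'article laisse implicites, mais l'argument est identique.
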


\begin{proof}

Soit $\delta > 0$.

On choisit d'abord $q \geq 1$ tel que 

$$\left| h_{\mu}(f, \Pcal)- \frac{1}{q} H_{\mu} \left( \bigvee_{i=0}^{q-1} f^{-i}( \Pcal ) \right) \right| \leq \delta.$$

Ensuite, en utilisant la proposition précédente, on a l'existence de $n_0$ avec pour $n \geq n_0$

$$\left| \frac{1}{q} H_{\mu} \left( \bigvee_{i=0}^{q-1} f^{-i}( \Pcal ) \right) - \frac{1}{q} H_{\mu_n} \left( \bigvee_{i=0}^{q-1} f^{-i}( \Pcal ) \right) \right| \leq \delta.$$

Comme

$$h_{\mu_n}(f, \Pcal)= \inf_{q \geq 1} \frac{1}{q} H_{\mu_n} \left( \bigvee_{i=0}^{q-1} f^{-i}( \Pcal ) \right) ,$$

on en déduit que pour $n \geq n_0$

\begin{equation*}
\begin{split}
h_{\mu_n}(f, \Pcal) &\leq \frac{1}{q} H_{\mu_n} \left( \bigvee_{i=0}^{q-1} f^{-i}( \Pcal ) \right) \\
& \leq \frac{1}{q} H_{\mu} \left( \bigvee_{i=0}^{q-1} f^{-i}( \Pcal ) \right) + \delta \leq h_{\mu}(f, \Pcal) + 2 \delta.\\
\end{split}
\end{equation*}

Cela démontre la proposition.

\end{proof}

\section{Un théorème d'entropie locale à la Newhouse}{\label{Newhouse}}

Dans cette partie, on considère toujours une fonction $\eta$ continue définie sur $X$ comprise entre $0$ et $1$ et qui vaut $0$ sur l'ensemble d'indétermination $I$. On note $\Pcal$ la partition associée construite au paragraphe précédent. On rappelle que l'on considèrere cette partition restreinte à $\Omega=X \setminus \cup_{i \geq 0} f^{-i}(I)$ (que l'on notera toujours $\mathcal{P}$).

Le but de ce paragraphe va être de donner un théorème d'entropie locale à la Newhouse (voir \cite{N1}). Commençons par adapter sa définition à notre contexte.

Considérons comme dans \cite{M} les boules dynamiques associée à la fonction $\eta$: pour $x \in \Omega$,

$$B(x,\eta,n,f):=\{ y \in \Omega \mbox{  ,  } dist(f^{i}(x), f^{i}(y)) \leq \eta(f^{i}(x)) \mbox{  ,  } i=0, \cdots , n-1 \}.$$

Pour $\Lambda$ un ensemble de $X$, notons $r(n,\gamma,\eta, \Lambda,x)$ le cardinal maximal d'un ensemble $(n,\gamma)$-séparé inclus dans $B(x,\eta,n,f) \cap \Lambda$. Si $\nu \in \mathcal{M}$, on définit l'entropie locale

$$h_{\nu, \eta, loc}(f):= \lim_{\sigma \rightarrow 1^-} \inf_{ \{ \Lambda \mbox{, } \nu(\Lambda) \geq \sigma \} } \lim_{\gamma \rightarrow 0} \limsup_{n \rightarrow + \infty} \frac{1}{n} \int_{\Lambda} \log r(n, \gamma, \eta, \Lambda, x) d \nu(x).$$

L'entropie locale est plus petite que l'entropie topologique de $f$. En particulier, si $X$ est kählérienne elle est majorée par $\max_{l=0, \cdots, k} \log d_l$ où les $d_l$ sont les degrés dynamiques de $f$ (voir \cite{Gr}, \cite{DS1} et \cite{DS2}).

Revenons au cas général où $X$ est une variété complexe compacte de dimension $k$. On a  alors

\begin{theoreme}{\label{local}}

Pour tout $\nu \in \mathcal{M}$ avec $\int \log \eta(x) d \nu(x) > - \infty$ on a 

$$h_{\nu}(f) \leq h_{\nu}(f, \Pcal) + h_{\nu, \eta, loc}(f).$$

\end{theoreme}

\begin{proof}

Il s'agit ici d'adapter la démonstration de Newhouse (voir \cite{N1}).

Soient $N \in \Nn^*$ et $\alpha= \{E_1, \cdots , E_r , X \setminus \cup_{i=1}^r E_i \}$ une partition de $X$ avec les $E_i$ compacts. On notera encore $\alpha$ cette partition restreinte à $\Omega$. On prend $\sigma < 1$ proche de $1$, $\Lambda$ un ensemble tel que $\nu(\Lambda) \geq \sigma$ puis $\gamma >0$ suffisamment petit pour que $2 \gamma$ soit strictement plus petit que toutes les distances entre $E_i$ et $E_j$ pour $i \neq j$.

On considère dans la suite $n=qN$ et $\beta=(\Lambda \cap \Omega, \Omega \setminus \Lambda )$. Pour simplifier les expressions nous noterons $\Pcal^n = \bigvee_{i=0}^{n-1} f^{-i} (\Pcal)$ et $\alpha_q= \bigvee_{i=0}^{q-1} f^{-iN} (\alpha)$. On a 

$$H_{\nu}(  \alpha_q | \Pcal^n)  \leq H_{\nu}(\alpha_q \vee \beta | \Pcal^n)= H_{\nu}(\beta | \Pcal^n) + H_{\nu}( \alpha_q |  \Pcal^n \vee \beta) \leq \log 2 + H_{\nu}(\alpha_q | \Pcal^n \vee \beta).$$

Estimons maintenant $H_{\nu}(\alpha_q | \Pcal^n \vee \beta)$. On a 

$$H_{\nu}( \alpha_q | \Pcal^n \vee \beta) = \sum_{B \in \Pcal^n \vee \beta} \nu(B) \left( - \sum_{A \in \alpha_q} \nu(A|B) \log \nu(A|B) \right).$$

On coupe la somme $\sum_{B \in \Pcal^n \vee \beta}$ en deux parties: $\sum_{B \in \Pcal^n \vee \beta \mbox{, } B \subset \Lambda}$ et $ \sum_{B \in \Pcal^n \vee \beta \mbox{, } B \subset \Lambda^c}$. 

Commençons par traiter la deuxième partie.

\begin{equation*}
\begin{split}
&\sum_{B \in \Pcal^n \vee \beta \mbox{, } B \subset \Lambda^c} \nu(B) \left( - \sum_{A \in \alpha_q} \nu(A|B) \log \nu(A|B) \right) \leq \nu(\Lambda^c) \log (\mbox{Card}(\alpha_q)) \\
& \leq q  \nu(\Lambda^c) \log (\mbox{Card}(\alpha)) \leq q  (1 - \sigma) \log (\mbox{Card}(\alpha)) .\\
\end{split}
\end{equation*}

Pour l'autre terme

\begin{equation*}
\begin{split}
&\sum_{B \in \Pcal^n \vee \beta \mbox{, } B \subset \Lambda} \nu(B) \left( - \sum_{A \in \alpha_q} \nu(A|B) \log \nu(A|B) \right)\\
&= \int_{\Lambda} - \sum_{A \in \alpha_q} \nu(A|B(x)) \log \nu(A|B(x)) d \nu(x)\\
\end{split}
\end{equation*}

où $B(x)$ est l'élément $B \in \Pcal^n \vee \beta$ qui contient $x$ et où dans la somme à l'intérieur de l'intégrale, on peut se restreindre aux $A \in \alpha_q$ qui rencontrent $B(x)$.

Fixons $x \in \Lambda \cap \Omega$ et majorons le terme dans l'intégrale. 

On note $F(x)$ un ensemble maximal dans $B(x, \eta,n,f) \cap \Lambda$ tel que si $y,z \in F(x)$ avec $y \neq z$ il existe $0 \leq j \leq q-1$ avec $dist(f^{jN}(y), f^{jN}(z)) >  \gamma$. C'est en particulier un ensemble $(\gamma, n )$-séparé donc le cardinal de $F(x)$ est plus petit que $r(n, \gamma, \eta, \Lambda,x)$.

Soit $A \in \alpha_q$ qui rencontre $B(x)$ et $x_A \in A \cap B(x)$. On choisit $\varphi(A) \in F(x)$ tel que $dist(f^{jN}(x_A), f^{jN}(\varphi(A))) \leq  \gamma$ pour $j=0, \cdots , q-1$. Ce $\varphi(A)$ est bien défini. En effet, on a $B(x) \subset B(x, \eta,n,f) \cap \Lambda$ car si $y \in B(x)$, alors $x$ et $y$ sont dans 

$$\Pcal(x) \cap f^{-1}(\Pcal(f(x))) \cap \cdots \cap f^{-n+1}(\Pcal(f^{n-1}(x)))$$

et on conclut en utilisant le fait que le diamètre de $\Pcal(f^{i}(x))$ est plus petit que $\eta(f^{i}(x))$ par la proposition \ref{partition}.

Estimons le nombre de $A$ qui ont le même $\varphi(A)$. On rappelle que $\alpha_q= \bigvee_{i=0}^{q-1} f^{-iN} (\alpha)$. Si $A, A' \in \alpha_q$ avec $A \subset f^{-iN}(E_{l_1})$ et $A' \subset f^{-iN}(E_{l_2})$ où $l_1 \neq l_2$ alors $\varphi(A) \neq \varphi(A')$ car la distance entre $E_{l_1}$ et $E_{l_2}$ est plus grande que $2 \gamma$. Il y a donc au plus $2^q$ ensembles $A$ qui ont le même $\varphi(A)$.

On a alors:

$$- \sum_{A \in \alpha_q} \nu(A|B(x)) \log \nu(A|B(x)) \leq \log (2^q \mbox{Card}(F(x)))$$

c'est-à-dire

$$H_{\nu}( \alpha_q | \Pcal^n \vee \beta) \leq q  (1 - \sigma) \log (\mbox{Card}(\alpha))  + \int_{\Lambda} \log (2^q \mbox{Card}(F(x))) d \nu(x).$$

En combinant ceci avec l'inégalité obtenue au début de la démonstration, on a

$$H_{\nu}( \alpha_q | \Pcal^n ) \leq \log 2 + q (1 - \sigma) \log (\mbox{Card}(\alpha))  + q \log 2 + \int_{\Lambda} \log r(n, \gamma, \eta,\Lambda, x) d \nu(x).$$

Comme on a pris $n=qN$, on obtient 

$$\limsup_{q \rightarrow + \infty} \frac{1}{qN} H_{\nu}( \alpha_q | \Pcal^{qN}) \leq \frac{(1 - \sigma) \log (\mbox{Card}(\alpha))}{N} + \frac{\log 2}{N} + \limsup_{n \rightarrow + \infty} \frac{1}{n} \int_{\Lambda} \log r(n, \gamma, \eta, \Lambda, x) d \nu(x).$$

Maintenant, si on fait tendre $\gamma$ vers $0$, on obtient une inégalité qui est vraie pour tout $\Lambda$ qui vérifie $\nu(\Lambda) \geq \sigma$, d'où:

\begin{equation*}
\begin{split}
\limsup_{q \rightarrow + \infty} \frac{1}{qN} H_{\nu}( \alpha_q | \Pcal^{qN}) \leq & \frac{(1 - \sigma) \log (\mbox{Card}(\alpha))}{N} + \frac{\log 2}{N} \\
&+ \inf_{ \{ \Lambda \mbox{, } \nu(\Lambda) \geq \sigma \} } \lim_{\gamma \rightarrow 0} \limsup_{n \rightarrow + \infty} \frac{1}{n} \int_{\Lambda} \log r(n, \gamma, \eta, \Lambda, x) d \nu(x).\\
\end{split}
\end{equation*}

Enfin, en faisant $\sigma \rightarrow 1$, 

$$\limsup_{q \rightarrow + \infty} \frac{1}{qN} H_{\nu}( \alpha_q | \Pcal^{qN}) \leq  \frac{\log 2}{N} + h_{\nu, \eta, loc}(f).$$

Il reste à voir comment cette inégalité implique celle que l'on cherche.

Le fait que,

$$\lim_{q \rightarrow + \infty} \frac{1}{qN} H_{\nu} \left( \bigvee_{i=0}^{q-1} f^{-iN} (\alpha) \right)= \frac{1}{N} h_{\nu}(\alpha, f^N)$$

et

$$H_{\nu} \left( \bigvee_{i=0}^{q-1} f^{-iN} (\alpha) \right) \leq H_{\nu} ( \Pcal^{qN}) + H_{\nu} \left( \bigvee_{i=0}^{q-1} f^{-iN} (\alpha)  \Big{|}   \Pcal^{qN}  \right)$$

impliquent

$$\frac{1}{N} h_{\nu}(\alpha, f^N) \leq h_{\nu}(f , \Pcal)   +   \frac{\log 2}{N} + h_{\nu, \eta, loc}(f).$$

On utilise ici le fait que l'entropie de $\Pcal$ pour $\nu$ est finie par la proposition \ref{partition} et $\int \log \eta(x) d \nu(x) > - \infty$.

Si on prend le sup sur les partitions $\alpha$ puis que l'on fait tendre $N$ vers l'infini, on obtient le théorème.

\end{proof}

\section{Théorie de Pesin et applications}

Nous allons voir dans ce paragraphe comment la théorie de Pésin permet de contrôler l'entropie locale précédente.

On munit $X$ d'une famille de cartes $(\tau_x)_{x \in X}$ telles que $\tau_x(0)=x$, $\tau_x$ est définie sur une boule $B(0, \epsilon_0) \subset \Cc^k$ avec $\epsilon_0$ indépendant de $x$ et la norme de la dérivée première et seconde de $\tau_x$ sur $B(0, \epsilon_0)$ est majorée par une constante indépendante de $x$. Pour construire ces cartes il suffit de partir d'une famille finie $(U_i, \psi_i)$ de cartes de $X$ et de les composer par des translations.

On considère $\eta$ une fonction continue définie sur $X$ comprise entre $0$ et $1$ et qui vérifie $\eta(x) \leq dist(x,I)$. En particulier, elle vaut $0$ sur l'ensemble d'indétermination $I$. On rappelle que l'on note:

$$B(x,\eta,n,f):=\{ y \in \Omega \mbox{  ,  } dist(f^{i}(x), f^{i}(y)) \leq \eta(f^{i}(x)) \mbox{  ,  } i=0, \cdots , n-1 \}.$$

On a 

\begin{theoreme}{\label{volume}}

Soit $\nu \in \mathcal{M}$ telle que $\int \log \eta(x) d \nu(x) > - \infty$. Alors

$$h_{\nu, \eta, loc}(f) \leq \limsup_{n \rightarrow \infty} \frac{1}{n} \int \log^+ \left( \sup_{Y \mbox{ affine}} \mbox{Vol}(f^n(\tau_x(Y) \cap B(x, 2 \eta,n,f))) \right) d \nu(x).$$

Ici le volume est celui $2l$-dimensionnel réel (compté avec multiplicité) où $l$ est la dimension complexe de $Y$ (qui est un plan affine complexe) et $\log^+(a)=\max(0, \log(a))$ pour $a>0$.

\end{theoreme}

\subsection{\bf{Théorème d'Oseledets et transformée de graphe}}

Dans ce paragraphe, on considère une probabilité $\nu \in \mathcal{M}$ telle que $\int \log \eta(x) d \nu(x) > - \infty$. Cette hypothèse va permettre de définir des exposants de Lyapounov et de faire de la théorie de Pesin. Rappelons que l'on note $\Omega=X \setminus \cup_{i \geq 0} f^{-i}(I)$. Comme $\nu$ ne charge pas $I$, $\nu$ est une probabilité de $\Omega$.

Tout d'abord on définit l'extension naturelle:

$$\widehat{\Omega}:= \{ \widehat{x}=( \cdots, x_0, \cdots , x_n , \cdots) \in \Omega^{\Zz} \mbox{ , } f(x_{n})=x_{n+1} \}.$$

Dans cet espace, $f$ induit une application $\sigma$ qui est le décalage à gauche. Si on note $\pi$ la projection canonique $\pi(\widehat{x})=x_0$ alors $\nu$ se relève en une unique probabilité $\widehat{\nu}$ invariante par $\sigma$ qui vérifie $\pi_{*} \widehat{\nu}=\nu$.

Dans toute la suite, on notera $f_x= \tau_{f(x)}^{-1} \circ f \circ \tau_x$ qui est définie au voisinage de $0$ quand $x$ n'est pas dans $I$. Le cocycle auquel nous allons appliquer la théorie de Pesin est alors donné par:

\begin{equation*}
\begin{split}
A : & \widehat{\Omega} \longrightarrow M_k(\Cc)\\
& \widehat{x} \longrightarrow Df_x(0)\\
\end{split}
\end{equation*}

où $M_k(\Cc)$ est l'ensemble des matrices carrées $k \times k$ à coefficients dans $\Cc$ et $\pi(\widehat{x})=x$. Afin d'avoir un théorème du type Oseledets, nous aurons besoin du lemme suivant:

\begin{lemme}

$$\int \log^+ \| A( \widehat{x}) \| d \widehat{\nu} ( \widehat{x}) < + \infty.$$

\end{lemme}

\begin{proof}

On pose $h(x)= \log^+ \|Df_x(0) \|$. On a alors si $\pi(\widehat{x})=x$,

\begin{equation*}
\begin{split}
&\int \log^+ \| A( \widehat{x}) \| d \widehat{\nu} ( \widehat{x}) = \int  h(x) d \widehat{\nu} ( \widehat{x})\\
&=\int  h \circ \pi ( \widehat{x}) d \widehat{\nu} ( \widehat{x})=\int  h( x) d \nu(x).\\
\end{split}
\end{equation*}

Maintenant comme

$$ \|Df_x(0) \| \leq C' \|Df(x) \| \leq C'' dist(x, I)^{-p}$$

(voir le lemme 2.1 de \cite{DiDu}), le lemme découle de l'intégrabilité de la fonction $\log dist(x,I)$ pour la mesure $\nu$.

\end{proof}

Grâce à ce lemme, on obtient un théorème du type Oseledets (voir  \cite{FLQ} et \cite{Th} ainsi que le théorème 2.3 de \cite{N}, le théorème 6.1 dans \cite{Du} et \cite{M1}):

\begin{theoreme}{\label{pesin}}

 Il existe des réels $\lambda_1(\widehat{x}) > \lambda_2(\widehat{x}) > \cdots > \lambda_{l(\widehat{x})}(\widehat{x}) \geq - \infty$, des entiers $m_1(\widehat{x}), \cdots, m_{l(\widehat{x})}(\widehat{x})$ et un ensemble $\widehat{\Gamma}$ de mesure pleine pour $\widehat{\nu}$ tels que pour $\widehat{x} \in \widehat{\Gamma}$ on ait une décomposition de $\Cc^k$ de la forme $\Cc^k= \bigoplus_{i=1}^{l(\widehat{x})} E_i(\widehat{x})$ où les $E_i(\widehat{x})$ sont des sous-espaces vectoriels de dimension $m_i(\widehat{x})$. Les fonctions $l(\widehat{x})$, $\lambda_i(\widehat{x})$ et $m_i(\widehat{x})$ (pour $i=1 , \cdots , l(\widehat{x})$) sont invariantes par $\sigma$ et on a:

1) $A(\widehat{x}) E_i(\widehat{x}) \subset E_i(\sigma(\widehat{x}))$ avec égalité si $\lambda_i(\widehat{x}) > - \infty$.

2) Pour $v \in E_i(\widehat{x}) \setminus \{0 \}$, on a 
$$\lim_{n \rightarrow + \infty} \frac{1}{n} \log \|A(\sigma^{n-1}(\widehat{x})) \cdots A(\widehat{x}) \| = \lambda_i(\widehat{x}).$$

Si de plus, $\lambda_i(\widehat{x}) > - \infty$, on a la même limite quand $n$ tend vers $- \infty$.

Pour tout $\epsilon > 0$, il existe une fonction $C_{\epsilon} :  \widehat{\Gamma} \longrightarrow GL_k(\Cc)$ telle que pour $\widehat{x} \in \widehat{\Gamma}$:

1) $\lim_{n \rightarrow \infty} \frac{1}{n} \log \| C^{\pm 1}_{\epsilon} (\sigma^n(\widehat{x})) \|=0$ (on parle de fonction tempérée).

2) $C_{\epsilon}(\widehat{x})$ envoie la décomposition standard $\bigoplus_{i=1}^{l(\widehat{x})} \Cc^{m_i(\widehat{x})}$ sur $\bigoplus_{i=1}^{l(\widehat{x})} E_i(\widehat{x})$.

3) La matrice $A_{\epsilon}(\widehat{x})= C_{\epsilon}^{-1}(\sigma(\widehat{x})) A(\widehat{x}) C_{\epsilon}(\widehat{x})$ est diagonale par bloc $(A^1_{\epsilon}(\widehat{x}), \cdots, A^{l(\widehat{x})}_{\epsilon}(\widehat{x}))$ où chaque $A^{i}_{\epsilon}(\widehat{x})$ est une matrice carrée $m_i(\widehat{x}) \times m_i(\widehat{x})$ et
$$\forall v \in \Cc^{m_i(\widehat{x})}   \mbox{    } \mbox{  on a   } \mbox{    } e^{\lambda_i(\widehat{x}) - \epsilon} \|v\| \leq \| A^{i}_{\epsilon}(\widehat{x}) v \| \leq e^{\lambda_i(\widehat{x}) + \epsilon} \|v\|$$

si $\lambda_i(\widehat{x}) > - \infty$ et  

$$\forall v \in \Cc^{m_{l(\widehat{x})}(\widehat{x})}   \mbox{    } \mbox{     } \mbox{    }  \| A^{l(\widehat{x})}_{\epsilon}(\widehat{x}) v \| \leq e^{\epsilon} \|v\|$$

si $\lambda_{l(\widehat{x})}(\widehat{x})= - \infty$.

\end{theoreme}

Notons maintenant $g_{\widehat{x}}$ la lecture de $f_x$ dans les cartes $C_{\epsilon}$ c'est-à-dire $g_{\widehat{x}}= C_{\epsilon}^{-1}(\sigma(\widehat{x}))  \circ f_x \circ C_{\epsilon}(\widehat{x})$ où $\pi(\widehat{x})=x$. On a alors 

\begin{proposition}{\label{proposition10}}

Il existe des constantes $\epsilon_1$ et $C$ qui ne dépendent que de $X$ et $f$ telles que pour $\widehat{x} \in \widehat{\Gamma}$ on ait

1) $g_{\widehat{x}}(0)=0$.

2) $D g_{\widehat{x}}(0)=A_{\epsilon}(\widehat{x})$.

3) Si on note $g_{\widehat{x}}(w)=D g_{\widehat{x}}(0)w + h(w)$, on a 
$$\| Dh(w) \| \leq 2C \|C_{\epsilon}^{-1}(\sigma(\widehat{x}))\|  \|C_{\epsilon}(\widehat{x})\|^2 dist(x,I)^{-p} \|w\|$$
pour $\| w \| \leq \frac{\epsilon_1 dist(x,I)^p}{ \|C_{\epsilon}(\widehat{x})\|}$.

\end{proposition}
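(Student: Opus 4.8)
The plan is to unwind the definition $g_{\widehat x} = C_\epsilon^{-1}(\sigma(\widehat x)) \circ f_x \circ C_\epsilon(\widehat x)$ step by step and keep track of how each composition affects the value at $0$, the derivative at $0$, and the second-order remainder. Point 1) is immediate: $f_x(0) = \tau_{f(x)}^{-1}(f(\tau_x(0))) = \tau_{f(x)}^{-1}(f(x)) = 0$, and $C_\epsilon(\widehat x)$, $C_\epsilon^{-1}(\sigma(\widehat x))$ are linear and fix $0$, so $g_{\widehat x}(0)=0$. Point 2) is the chain rule: $Dg_{\widehat x}(0) = C_\epsilon^{-1}(\sigma(\widehat x)) \cdot Df_x(0) \cdot C_\epsilon(\widehat x) = C_\epsilon^{-1}(\sigma(\widehat x)) A(\widehat x) C_\epsilon(\widehat x) = A_\epsilon(\widehat x)$, using the definition of the cocycle $A$ and of $A_\epsilon$ from Theorem \ref{pesin}.

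The real content is point 3). First I would handle $f_x$ alone. Writing $f_x(v) = Df_x(0)v + h_0(v)$, Taylor's formula with integral remainder gives $Dh_0(v) = Df_x(v) - Df_x(0) = \int_0^1 D^2 f_x(tv)\,v\,dt$, so $\|Dh_0(v)\| \le \|v\| \sup_{\|u\|\le\|v\|} \|D^2 f_x(u)\|$. The bound on $\|D^2 f_x\|$ comes from the same source as in the previous lemma (the estimates of Lemme 2.1 of \cite{DiDu} together with the uniform control on the first and second derivatives of the charts $\tau_x$): on a ball of radius comparable to $\epsilon_1\, dist(x,I)^p$ one has $\|D^2 f_x(u)\| \le C\, dist(x,I)^{-p}$ for a constant $C$ depending only on $X$ and $f$. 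This is where the hypothesis $\eta(x)\le dist(x,I)$ and the polynomial blow-up rate $p$ of $f$ near $I$ enter; the radius restriction is exactly what makes the chart $f_x$ well-defined and its second derivative controlled.

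Then I would conjugate by the linear maps. Since $g_{\widehat x}(w) = C_\epsilon^{-1}(\sigma(\widehat x))\, f_x(C_\epsilon(\widehat x) w)$ and $A_\epsilon(\widehat x) w = C_\epsilon^{-1}(\sigma(\widehat x))\, Df_x(0)\, C_\epsilon(\widehat x) w$, subtracting gives $h(w) = C_\epsilon^{-1}(\sigma(\widehat x))\, h_0(C_\epsilon(\widehat x) w)$, hence $Dh(w) = C_\epsilon^{-1}(\sigma(\widehat x)) \circ Dh_0(C_\epsilon(\widehat x) w) \circ C_\epsilon(\widehat x)$. Taking norms, $\|Dh(w)\| \le \|C_\epsilon^{-1}(\sigma(\widehat x))\|\,\|C_\epsilon(\widehat x)\|\,\|Dh_0(C_\epsilon(\widehat x)w)\| \le \|C_\epsilon^{-1}(\sigma(\widehat x))\|\,\|C_\epsilon(\widehat x)\|^2\, C\, dist(x,I)^{-p}\,\|w\|$, which after absorbing the factor $C$ into the constant (I'll carry it as $2C$ to match the statement, the extra factor $2$ giving room for the Taylor estimate) is exactly the claimed inequality. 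The condition $\|C_\epsilon(\widehat x) w\| \le \epsilon_1\, dist(x,I)^p$ needed to apply the bound on $Dh_0$ translates into $\|w\| \le \epsilon_1\, dist(x,I)^p / \|C_\epsilon(\widehat x)\|$, which is the domain restriction in the statement. The only mild obstacle is bookkeeping: making sure the radius on which $D^2 f_x$ is controlled really is of the form $\epsilon_1\, dist(x,I)^p$ with $\epsilon_1$ uniform — this follows from \cite{DiDu} and the uniformity of the charts $(\tau_x)$, so no new idea is required.
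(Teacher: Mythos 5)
Your proposal is correct and follows essentially the same route as the paper: points 1) and 2) by direct computation, and point 3) by writing $Dh(w)=C_{\epsilon}^{-1}(\sigma(\widehat{x}))\circ(Df_x(C_{\epsilon}(\widehat{x})w)-Df_x(0))\circ C_{\epsilon}(\widehat{x})$ and invoking the bound $\|D^2f\|\leq C\,dist(\cdot,I)^{-p}$ of Dinh--Dupont along the segment $[0,C_{\epsilon}(\widehat{x})w]$, whose $\tau_x$-image stays at distance $\geq(1-K\epsilon_1)\,dist(x,I)$ from $I$. The only part you defer to ``bookkeeping'' --- checking that $f\circ\tau_x\circ C_{\epsilon}(\widehat{x})(w)$ actually lands in the domain of $\tau_{f(x)}^{-1}$ for $\|w\|\leq\epsilon_1\,dist(x,I)^p/\|C_{\epsilon}(\widehat{x})\|$ --- is carried out explicitly in the paper by the same segment estimate, so nothing essential is missing.
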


\begin{proof}

On suit la preuve de la proposition 8 de \cite{Det1}.

On utilisera (voir le lemme 2.1 de \cite{DiDu})

$$\|Df(x)\| + \|D^2f(x)\| \leq C dist(x,I)^{-p}$$

et on supposera dans la suite que $p \geq 5$.

Commençons par montrer que $g_{\widehat{x}}(w)$ est défini pour $\| w \| \leq \frac{\epsilon_1 dist(x, I)^p}{ \|C_{\epsilon}(\widehat{x})\|}$.

Par construction des cartes $\tau_x$, on peut trouver $\epsilon_0$  qui ne dépend que de $X$ tel que pour $x \in X$ les $\tau_x$ sont définis sur $B(0, \epsilon_0)$ et les $\tau_x^{-1}$ sur $B(x, \epsilon_0)$.

Pour $\| w \| \leq \frac{\epsilon_0}{\|C_{\epsilon}(\widehat{x})\|}$, on a 

\begin{equation*}
\begin{split}
dist(f \circ \tau_x \circ C_{\epsilon}(\widehat{x}) (w), f(x)) &= dist(f \circ \tau_x \circ C_{\epsilon}(\widehat{x}) (w), f \circ \tau_x(0)) \\
&\leq C  \|C_{\epsilon}(\widehat{x}) (w)\| dist(\tau_x([0, C_{\epsilon}(\widehat{x})(w)]), I)^{-p} .\\
\end{split}
\end{equation*}

L'image par $\tau_x$ du segment $[0, C_{\epsilon}(\widehat{x})(w)]$ vit dans la boule $B(x , K \| C_{\epsilon}(\widehat{x})(w) \| )$ où $K$ est une constante qui ne dépend que de $X$. Si $\| w \| \leq \frac{\epsilon_1 dist(x,I)^p}{ \|C_{\epsilon}(\widehat{x})\|} \leq \frac{\epsilon_1 dist(x,I)}{\|C_{\epsilon}(\widehat{x})\|}$, on a alors

$$dist(f \circ \tau_x \circ C_{\epsilon}(\widehat{x}) (w), f(x)) \leq C(1-K \epsilon_1)^{-p} dist(x,I)^{-p} \|C_{\epsilon}(\widehat{x})\| \| w \|.$$

Le dernier terme est plus petit que $\epsilon_0$ si $\|w\| \leq \frac{\epsilon_1 dist(x,I)^p}{  \|C_{\epsilon}(\widehat{x})\|}$ (pour un $\epsilon_1$ qui ne dépend que de $X$ et $f$), ce qui signifie que $g_{\widehat{x}}(w)$ est défini pour de tels $w$.

Passons à la preuve de la proposition.

Le point $1$ est évident et le point $2$ découle du théorème précédent.

Pour le troisième point:

$Dg_{\widehat{x}}(w)=D g_{\widehat{x}}(0) + Dh(w)$, d'où pour $\| w \| \leq \frac{\epsilon_1 dist(x, I)^p}{ \|C_{\epsilon}(\widehat{x})\|}$

\begin{equation*}
\begin{split}
\| Dh(w) \|&= \|Dg_{\widehat{x}}(w)-D g_{\widehat{x}}(0)\|\\
&=\| C_{\epsilon}^{-1}(\sigma(\widehat{x})) \circ Df_x(C_{\epsilon}(\widehat{x}) (w)) \circ C_{\epsilon}(\widehat{x}) - C_{\epsilon}^{-1}(\sigma(\widehat{x})) \circ Df_x(0) \circ C_{\epsilon}(\widehat{x})\|\\
&\leq \| C_{\epsilon}^{-1}(\sigma(\widehat{x}))\|  \|Df_x(C_{\epsilon}(\widehat{x}) (w)) -Df_x(0) \| \| C_{\epsilon}(\widehat{x}) \|. \\
\end{split}
\end{equation*}

Mais on a $\| D^2f(x) \| \leq C d(x,I)^{-p}$ d'où

\begin{equation*}
\begin{split}
\| Dh(w) \| &\leq   C \| C_{\epsilon}^{-1}(\sigma(\widehat{x}))\|    \|C_{\epsilon}(\widehat{x}) (w)\|\| C_{\epsilon}(\widehat{x}) \| dist(\tau_x([0, C_{\epsilon}(\widehat{x})(w)]), I)^{-p} \\
&\leq C(1-K \epsilon_1)^{-p}  \| C_{\epsilon}^{-1}(\sigma(\widehat{x}))\| \|C_{\epsilon}(\widehat{x})\|^2  dist(x,I)^{-p} \|w\|\\
& \leq 2C \|C_{\epsilon}^{-1}(\sigma(\widehat{x}))\|  \|C_{\epsilon}(\widehat{x})\|^2  dist(x,I)^{-p} \|w\|.\\
\end{split}
\end{equation*}

C'est ce que l'on voulait démontrer.

\end{proof}

Un dernier théorème que nous utiliserons est la transformée de graphe dans le cas non-inversible (voir \cite{Du} théorème 6.4). Dans celui-ci $B_l(0,R)$ désigne la boule de centre $0$ et de rayon $R$ dans $\Cc^l$.

\begin{theoreme}(transformée de graphe cas non-inversible){\label{graphe}}(voir théorème 6.4 de \cite{Du})

Soient $A: \Cc^{k_1} \longrightarrow \Cc^{k_1}$, $B: \Cc^{k_2} \longrightarrow \Cc^{k_2}$ des applications linéaires avec $k=k_1 + k_2$. On suppose $A$ inversible, $\|B\| < \|A^{-1}\|^{-1}$ et on note $\xi=1 - \|B\| \|A^{-1}\| \in ]0,1]$. Soient $0 \leq \xi_0 \leq 1$, $0< \delta < 2 \epsilon$ tels que:

$$\xi_0(1- \xi)+2 \delta(1 + \xi_0) \|A^{-1}\| \leq 1  \mbox{   }\mbox{  et }$$

$$(\xi_0 \|B\|+ \delta(1 + \xi_0))(\|A^{-1}\|^{-1}-\delta(1 + \xi_0))^{-1} \leq \xi_0 \mbox{   }\mbox{  et }$$

$$(\|B\| + 2 \delta)e^{-2 \epsilon} + \delta \leq 1 \mbox{   }\mbox{  et } \mbox{   } \delta(1+ \xi_0) \leq \min \{ (\|A^{-1}\|^{-1} - \xi_0 \| B \|)/2 ,\|A^{-1}\|^{-1} -1 \}.$$

Soit $g: B_k(0,R_0) \longrightarrow B_k(0, R_1)$ holomorphe avec $R_0 \leq R_1$, $g(0)=0$, $Dg(0)=(A,B)$ et $\|Dg(w) - Dg(0) \| \leq \delta$ sur $B_k(0,R_0)$. 

Si $\phi : B_{k_2}(0, R) \longrightarrow \Cc^{k_1}$ vérifie $\|\phi(0)\| \leq R'$ et $Lip( \phi) \leq \xi_0$ pour un certain $R' \leq R \leq R_0 /2$ alors il existe $\psi:  B_{k_2} ( 0, R e^{-2\epsilon}) \longrightarrow \Cc^{k_1}$ avec $Lip (\psi) \leq \xi_0$, $\| \psi(0) \| \leq R'$ et $g(\mbox{graphe}(\psi)) \subset \mbox{graphe}(\phi)$.

\end{theoreme}

La seule chose qui change par rapport à \cite{Du}, c'est le fait de mettre $R'$ à la place de $R$ à la fin de l'énoncé. C'est une adaptation immédiate de la preuve de  \cite{Du}.

\subsection{\bf{Démonstration du théorème \ref{volume}}}

Soit $\nu \in \mathcal{M}$ telle que $\int \log \eta(x) d \nu(x) > - \infty$. 

Soit $\delta > 0$. Pour $\sigma$ proche de $1$, on a  que la différence entre $h_{\nu, \eta, loc}(f)$ et

$$\inf_{ \{ \Lambda \mbox{, } \nu(\Lambda) \geq \sigma \} } \lim_{\gamma \rightarrow 0} \limsup_{n \rightarrow + \infty} \frac{1}{n} \int_{\Lambda} \log r(n, \gamma, \eta,\Lambda, x) d \nu(x)$$

est plus petite que $\delta$.

Dans un premier temps nous allons construire un ensemble particulier $\Lambda$ sur lequel nous aurons de bonnes estimées. Ensuite nous majorerons $r(n, \gamma, \eta,\Lambda, x)$ sur cet ensemble.

\medskip

{\bf Construction de $\Lambda$:}

\medskip

On reprend les notations du paragraphe précédent. Tout d'abord, pour $\alpha_0 > 0$ suffisamment petit, la mesure pour $\widehat{\nu}$ de 

$$B_1=\left\{ \widehat{x} \in \widehat{\Gamma} \mbox{ , } \min_{ \{ \lambda_i(\widehat{x}) \neq 0 \}} | \lambda_i(\widehat{x}) | \geq \alpha_0 \right\}$$

est strictement plus grande que $\sigma$. Ensuite, on fixe $\epsilon > 0$ très petit devant $\alpha_0$ et $\delta$ et on considère

$$B_2=\left\{ \widehat{x} \in \widehat{\Gamma} \mbox{ , } \alpha_1 \leq \| C_{\epsilon}(\widehat{x})^{\pm 1} \| \leq \frac{1}{\alpha_1}  \right\}.$$

Si $\alpha_1$ est suffisamment petit on a $\widehat{\nu}(B_1 \cap B_2) > \sigma$. 

Maintenant, si on note

$$\mathcal{A}_{n_0}:= \{x \mbox{ , } \forall n \geq n_0 \mbox{   } \min_{i=0, \cdots, n-1} \eta(f^{i}(x)) \geq e^{- \epsilon n } \}$$

on a 

\begin{lemme}

On a $\nu( \cup_{n_0 \in \Nn} \mathcal{A}_{n_0})=1$.

\end{lemme}

\begin{proof}

Par le théorème de Birkhoff, il existe un ensemble $\mathcal{A}$ de mesure $1$ pour $\nu$ tel que pour $x \in \mathcal{A}$ on ait:

$$\lim_{n \rightarrow + \infty} \frac{1}{n} \sum_{i=0}^{n-1} \log \eta(f^{i}(x)) = \int \log \eta(x) d \nu(x) > - \infty.$$

Classiquement, cela implique que pour $x \in \mathcal{A}$ 

$$\lim_{n \rightarrow + \infty} \frac{1}{n}  \log \eta(f^{n}(x)) =0.$$

Soit $x \in \mathcal{A}$. Il existe $n_1$ tel que $\frac{1}{n}  \log \eta(f^{n}(x))  \geq -\epsilon $ si $n \geq n_1$. On a donc, pour $n > n_1$,

$$\min_{i=n_1, \cdots , n-1} \eta(f^{i}(x)) \geq \min_{i=n_1, \cdots , n-1} e^{- \epsilon  i} \geq e^{- \epsilon  n}.$$

Maintenant, comme $x \in \mathcal{A}$ les $\eta(x), \cdots , \eta(f^{n_1}(x))$ sont non nuls et il existe $n_2$ tel que pour $n \geq n_2$

$$\min_{i=0, \cdots , n_1} \eta(f^{i}(x)) \geq e^{- \epsilon  n}.$$

Le point $x$ est donc dans $\mathcal{A}_{\max(n_1, n_2)}$. Autrement dit $\mathcal{A}$ est inclus dans $\cup_{n_0 \in \Nn} \mathcal{A}_{n_0}$ et le lemme en découle.

\end{proof}

 On déduit de ce lemme que $\nu(\pi(B_1 \cap B_2) \cap \mathcal{A}_{n_0}) \geq \sigma$ si $n_0$ est grand. On notera dans la suite $\Lambda= \pi(B_1 \cap B_2 ) \cap \mathcal{A}_{n_0}$. C'est l'ensemble que l'on voulait construire.

Il s'agit maintenant de majorer 

$$\lim_{\gamma \rightarrow 0} \limsup_{n \rightarrow + \infty} \frac{1}{n} \int_{\Lambda} \log r(n, \gamma, \eta,\Lambda, x) d \nu(x).$$

\medskip

{\bf Majoration de $r(n, \gamma, \eta,\Lambda, x)$ sur $\Lambda$:}

\medskip 

Commençons par prendre $\gamma >0$ tel que la différence entre 

$$\limsup_{n \rightarrow + \infty} \frac{1}{n} \int_{\Lambda} \log r(n, \gamma, \eta,\Lambda, x) d \nu(x)$$

et 

$$\lim_{\gamma \rightarrow 0} \limsup_{n \rightarrow + \infty} \frac{1}{n} \int_{\Lambda} \log r(n, \gamma, \eta,\Lambda, x) d \nu(x)$$

soit plus petite que $\delta $.

La majoration de $r(n, \gamma, \eta,\Lambda, x)$ va se faire en deux étapes. Dans un premier temps nous construisons pour $x \in \Lambda$ un ensemble $Q(x)$ qui sera feuilleté par des variétés stables approchées. Ensuite nous utiliserons ces ensembles pour fabriquer un $Y$ affine tel que le volume de $f^n(\tau_x(Y) \cap B(x, 2 \eta,n,f))$ soit essentiellement plus grand que $r(n, \gamma, \eta,\Lambda, x)$.

\medskip

{\bf Etape $1$: construction des $Q(x)$:}

\medskip

Soit $x \in \Lambda$ et $\widehat{x} \in B_1 \cap B_2$ tel que $\pi( \widehat{x})=x$. On note $E_u(\widehat{x})$ la somme directe de tous les $E_i(\widehat{x})$ correspondant aux exposants strictement positifs et $E_s(\widehat{x})$ la somme directe des $E_i(\widehat{x})$ associés aux exposants négatifs ou nuls. La dimension complexe de $E_u(\widehat{x})$ sera notée $u$. Elle est comprise entre $0$ et $k$.

On se place maintenant dans

$$C_{\epsilon}^{-1}(\sigma^n(\widehat{x})) E_u(\sigma^n(\widehat{x})) \oplus C_{\epsilon}^{-1}(\sigma^n(\widehat{x})) E_s(\sigma^n(\widehat{x}))$$

et on part de 

$$\{a_1\} \times \cdots \times \{a_u \} \times B_2(0, e^{-4 p \epsilon n})$$

où $B_2(0, e^{-4 p \epsilon n})$ est la boule de centre $0$ et de rayon $e^{-4 p \epsilon n}$ dans $\Cc^{k-u}$ et $(a_1, \cdots , a_u) \in B_1(0, e^{-12 p \epsilon n})$ avec $B_1(0, e^{-12 p \epsilon n})$ la boule de centre $0$ et de rayon $e^{-12 p \epsilon n}$ dans $\Cc^{u}$. Cet ensemble est un graphe $(\Phi_n(Y),Y)$ au-dessus d'une partie de $ C_{\epsilon}^{-1}(\sigma^n(\widehat{x})) E_s(\sigma^n(\widehat{x}))$ (avec $\Phi_n(Y)=(a_1, \cdots , a_u)$). Dans le cas où $u=0$ ou $u=k$, tout ceci garde un sens en identifiant $\Cc^k$ avec $\{0\} \times \Cc^k$ ou $\Cc^k \times \{0\}$. Soit $0 < \xi_0 < \frac{1}{2}$ (petit aussi par rapport à $\alpha_1$).

\begin{lemme} 

Il existe un graphe $(\Phi_{n-1}(Y),Y)$ au-dessus de 
$$B_2(0, e^{-4 p \epsilon n - 2 \epsilon} ) \subset C_{\epsilon}^{-1}(\sigma^{n-1}(\widehat{x})) E_s(\sigma^{n-1}(\widehat{x}))$$
avec $\mbox{Lip }\Phi_{n-1} \leq \xi_0$, $g_{\sigma^{n-1}(\widehat{x})}(\mbox{graphe de }\Phi_{n-1}) \subset \mbox{graphe de }\Phi_n$ et $\| \Phi_{n-1}(0) \| \leq e^{-12 p \epsilon n}$.

\end{lemme}

\begin{proof}

Par la proposition \ref{proposition10} et le théorème \ref{pesin}, dans le repère

$$C_{\epsilon}^{-1}(\sigma^{n-1}(\widehat{x})) E_u(\sigma^{n-1}(\widehat{x})) \oplus C_{\epsilon}^{-1}(\sigma^{n-1}(\widehat{x})) E_s(\sigma^{n-1}(\widehat{x})),$$

on peut écrire  $g_{\sigma^{n-1}(\widehat{x})}$ sous la forme

$$g_{\sigma^{n-1}(\widehat{x})}(X,Y)=(A_{n-1} X + R_{n-1}(X,Y), B_{n-1}Y + U_{n-1}(X,Y))$$

avec:

$$D g_{\sigma^{n-1}(\widehat{x})}(0)=A_{\epsilon}(\sigma^{n-1}(\widehat{x}))=(A_{n-1}, B_{n-1})$$

où

$$\| A_{n-1}^{-1} \|^{-1} \geq e^{\alpha_0 - \epsilon} \geq e^{2 \epsilon} \mbox{ et } \|B_{n-1} \| \leq e^{\epsilon}.$$

En particulier, en utilisant les notations du théorème \ref{graphe}

$$1-  \xi= \|B_{n-1} \| \| A_{n-1}^{-1} \| \leq e^{-\epsilon} < 1.$$

Estimons maintenant le $\delta$ du théorème \ref{graphe}. On a par la proposition \ref{proposition10}

$$ \| D g_{\sigma^{n-1}(\widehat{x})}(0) - D g_{\sigma^{n-1}(\widehat{x})}(w) \| \leq 2C  \|C_{\epsilon}^{-1}(\sigma^n(\widehat{x}))\|  \|C_{\epsilon}(\sigma^{n-1}(\widehat{x}))\|^2 dist(f^{n-1}(x),I)^{-p} \|w\|$$

pour $\|w\| \leq \frac{\epsilon_1 dist(f^{n-1}(x),I)^p}{\| C_{\epsilon}(\sigma^{n-1}(\widehat{x})) \|}$. Mais comme les fonctions $\| C_{\epsilon}^{\pm 1} \|$ sont tempérées et que $\widehat{x} \in B_2$, on peut supposer que

$$\|C_{\epsilon}^{-1}(\sigma^n(\widehat{x}))\|  \|C_{\epsilon}(\sigma^{n-1}(\widehat{x}))\|^2  \leq \frac{1}{\alpha_1^3} e^{3 \epsilon n}.$$

Maintenant, comme $x \in \mathcal{A}_{n_0}$ et $\eta(x) \leq dist(x,I)$, on a $dist(f^{n-1}(x),I) \geq \eta(f^{n-1}(x)) \geq e^{-\epsilon n }$ pour $n \geq n_0$ et alors

$$ \| D g_{\sigma^{n-1}(\widehat{x})}(0) - D g_{\sigma^{n-1}(\widehat{x})}(w) \| \leq \frac{2C}{\alpha_1^3} e^{3 \epsilon n} e^{p \epsilon n} \| w \| \leq e^{2p \epsilon n} \| w \| $$

si $\|w\| \leq e^{-2p \epsilon n} \leq \frac{\alpha_1 \epsilon_1 e^{-p \epsilon n }}{ e^{\epsilon n}} \leq \frac{\epsilon_1 dist(f^{n-1}(x),I)^p}{\| C_{\epsilon}(\sigma^{n-1}(\widehat{x})) \|}$ pour $n$ grand.

En particulier, pour $\| w \| \leq R_0= e^{-3p \epsilon n}$ et $n$ grand, le $\delta$ du théorème \ref{graphe} est plus petit que $e^{-p \epsilon n}$.  

Le lemme découle alors de la proposition \ref{graphe} en prenant $R=e^{-4p \epsilon n}$ et $R'=e^{-12p \epsilon n}$ avec $n$ grand. Ici le $n$ grand est indépendant du $x$ choisi dans $\Lambda$.

\end{proof}

Maintenant on recommence ce que l'on vient de faire avec $g_{\sigma^{n-2}(\widehat{x})}$ au lieu de $g_{\sigma^{n-1}(\widehat{x})}$. On se place toujours dans la boule $B(0, R_0)$ avec $R_0= e^{-3p \epsilon n}$ et on prend $R=e^{-4p \epsilon n -2 \epsilon}$.  On obtient ainsi (toujours pour le même $n$ grand que précédemment) l'existence d'un graphe $(\Phi_{n-2}(Y),Y)$ au-dessus de 
$$B_2(0, e^{-4 p \epsilon n - 4 \epsilon} ) \subset C_{\epsilon}^{-1}(\sigma^{n-2}(\widehat{x})) E_s(\sigma^{n-2}(\widehat{x}))$$
avec $\mbox{Lip }\Phi_{n-2} \leq \xi_0$, $g_{\sigma^{n-2}(\widehat{x})}(\mbox{graphe de }\Phi_{n-2}) \subset \mbox{graphe de }\Phi_{n-1}$ et $\| \Phi_{n-2}(0) \| \leq e^{-12 p \epsilon n}$.

On continue ainsi le procédé. A la fin on obtient un graphe $(\Phi_0(Y),Y)$ au-dessus de 
$$B_2(0, e^{-4 p \epsilon n - 2 \epsilon n} ) \subset C_{\epsilon}^{-1}(\widehat{x}) E_s(\widehat{x})$$
avec $\mbox{Lip }\Phi_{0} \leq \xi_0$, $g_{\widehat{x}}(\mbox{graphe de }\Phi_{0}) \subset \mbox{graphe de }\Phi_{1}$ et $\| \Phi_{0}(0) \| \leq e^{-12 p \epsilon n}$.

Grâce aux estimées sur $B_2$, l'image de ce graphe par $ C_{\epsilon}(\widehat{x})$ est un graphe $(\Psi_0(Y),Y)$ dans le repère $ E_u(\widehat{x}) \oplus E_s(\widehat{x})$ au-dessus d'une boule $B_2(0, e^{-4 p \epsilon n - 3 \epsilon n} )$. Il vérifie $\mbox{Lip }\Psi_{0} \leq \frac{\xi_0}{\alpha_1^2}$ qui est aussi petit que l'on veut pourvu que $\xi_0$ le soit par rapport à $\alpha_1$ (ce que l'on a supposé) et $\| \Psi_0(0) \| \leq e^{-12 p \epsilon n + \epsilon n}$.

On notera $Q(x)$ la réunion de tous ces graphes $(\Psi_0(Y),Y)$ quand $(a_1, \cdots , a_u)$ décrit $B_1(0, e^{-12 p \epsilon n})$. 

\medskip

{\bf Etape 2: fin de la majoration:}

\medskip

Soit $x \in \Lambda$ fixé et $x_1, \cdots , x_N$ un ensemble maximal de points $(n, \gamma)$-séparés dans $B(x, \eta, n ,f) \cap \Lambda$. On supposera dans la suite que $N \geq e^{40 pk \epsilon n}$ (sinon on verra à la fin de ce paragraphe que l'on obtient directement la majoration voulue).

Quitte à remplacer $N$ par $N/K_0$ où $K_0$ ne dépend que de $X$, on peut supposer que la dimension des $E_u(\widehat{x_i})$ est la même pour tous les $x_i$ (on la notera $u$) et que tous les $\tau_{x_i}$ sont égaux à une seule carte $\psi:U \longrightarrow X$ modulo des translations. En particulier la distance entre le bord de $U$ et les $\psi^{-1}(x_i)$ est plus grande que $\epsilon_0$.

Toujours quitte à remplacer $N$ par $N/K_0$, on peut supposer que les graphes qui constituent les $\psi^{-1} ( \tau_{x_i}(Q(x_i)))$ (qui sont juste les translatés des $Q(x_i)$) sont des graphes au-dessus d'un plan complexe $P$ de dimension $k-u$. Par ailleurs comme les $\| \Psi_0(0) \|$ sont très petits devant $e^{-4 p \epsilon n - 3 \epsilon n} $, pour chaque $\psi^{-1} ( \tau_{x_i}(Q(x_i)))$, on peut supposer que tous les graphes qui le constituent sont des graphes au-dessus d'une même boule de rayon $e^{-6 p  \epsilon n}$ dans $P$ (quitte à ne garder que cette partie là). Si on note $G(x_i)$ un des graphes qui constitue $\psi^{-1} ( \tau_{x_i}(Q(x_i)))$, le volume $2(k-u)$-dimensionnel réel de la projection de tous ces graphes $G(x_i)$ ($i=1, \cdots, N$) est supérieur à $N e^{-12 p k \epsilon n}$.

Notons $\pi_1$ la projection orthogonale sur $P$. Comme $\pi_1(U)$ vit dans un compact de $P$, on peut trouver un point $a$ de $P$ tel que la fibre $L= \pi_1^{-1}(a)$ coupe $\cup_{i=1}^{N} G(x_i)$ en au moins $N e^{-12 p k \epsilon n}$ points (éventuellement divisé par une constante qui ne dépend que de $U$).  Pour simplifier les notations, nous noterons $x_i$ avec $i= 1 , \cdots , Ne^{-12 p k \epsilon n}$ ces points. Par construction, quand $L$ coupe $G(x_i)$ il coupe tous les graphes qui constituent $\psi^{-1} ( \tau_{x_i}(Q(x_i)))$. 

Signalons deux cas particuliers: quand $u=0$, $L$ est en fait un point commun aux $G(x_i)$ qui eux sont des ellipsoïdes. Lorsque $u=k$, $L$ est égal à $U$ et les $G(x_i)$ sont des points.

Montrons maintenant que les $\psi^{-1} ( \tau_{x_i}(Q(x_i)))$ sont deux à deux disjoints. Si $i \neq j$ les points $x_i$ et $x_j$ sont $(n , \gamma)$-séparés et il existe donc $l$ compris entre $0$ et $n-1$ avec $dist(f^l(x_i), f^l(x_j)) \geq \gamma$. D'autre part, on a 

\begin{lemme}

Pour $l=0, \cdots , n-1$ et $i=1, \cdots ,N$ 

$$f^l(\tau_{x_i}(Q(x_i))) \subset B(f^l(x_i), e^{-2 p \epsilon n}).$$

\end{lemme}

\begin{proof}

Par construction, on a pour $l=0, \cdots, n$

$$g_{\sigma^{l-1}(\widehat{x_i})} \circ \cdots \circ g_{\widehat{x_i}} (C^{-1}_{\epsilon}(\widehat{x_i})(Q(x_i))) \subset B(0, e^{-3 p \epsilon n})$$

(où par convention $g_{\sigma^{l-1}(\widehat{x_i})} \circ \cdots \circ g_{\widehat{x_i}}$ est l'identité si $l=0$).

De plus,

\begin{equation*}
\begin{split}
g_{\sigma^{l-1}(\widehat{x_i})} \circ \cdots \circ g_{\widehat{x_i}} &= C^{-1}_{\epsilon}(\sigma^{l}(\widehat{x_i})) \circ f_{f^{l-1}(x_i)} \circ \cdots \circ f_{x_i} \circ C_{\epsilon}(\widehat{x_i})\\
&= C^{-1}_{\epsilon}(\sigma^{l}(\widehat{x_i})) \circ \tau^{-1}_{f^{l}(x_i)} \circ f^l \circ \tau_{x_i} \circ C_{\epsilon}(\widehat{x_i}).\\
\end{split}
\end{equation*}

On a donc

$$f^l(\tau_{x_i}(Q(x_i))) \subset \tau_{f^{l}(x_i)} \circ C_{\epsilon}(\sigma^{l}(\widehat{x_i})) (B(0, e^{-3 p \epsilon n}))$$

ce qui donne le lemme (toujours pour $n$ grand) grâce au contrôle de $\| C_{\epsilon}(\widehat{x_i}) \|$ par $1/ \alpha_1$, le fait que $C_{\epsilon}$ est tempérée et le contrôle des dérivées premières de $\tau_x$ pour $x \in X$.

\end{proof}

Le fait que $dist(f^l(x_i), f^l(x_j)) \geq \gamma$ combiné au lemme implique que les $\tau_{x_i}(Q(x_i))$ sont deux à deux disjoints (et donc les $\psi^{-1} ( \tau_{x_i}(Q(x_i)))$ aussi).

Dans le cas particulier où $u=0$ cela implique que $N e^{-12pk \epsilon n}$ est inférieur à $1$ ce qui contredit le fait que $N \geq e^{40 pk \epsilon n}$. Dans la suite, on supposera donc que $u > 0$.

Pour $i= 1 , \cdots , Ne^{-12 p k \epsilon n}$, l'ensemble $C^{-1}_{\epsilon}(\sigma^{n}(\widehat{x_i})) \circ \tau^{-1}_{f^n(x_i)} (f^n(\psi(L) \cap \tau_{x_i}(Q(x_i))))$ rencontre par construction tous les 

$$\{a_1\} \times \cdots \times \{a_u \} \times B_2(0, e^{-4 p \epsilon n})$$

où $(a_1, \cdots , a_u) \in B_1(0, e^{-12 p \epsilon n})$ (car $L$ coupe tous les graphes qui constituent $\psi^{-1} ( \tau_{x_i}(Q(x_i)))$). En particulier le volume $2u$-dimensionnel de $C^{-1}_{\epsilon}(\sigma^{n}(\widehat{x_i})) \circ \tau^{-1}_{f^n(x_i)} (f^n(\psi(L) \cap \tau_{x_i}(Q(x_i))))$ est minoré par $e^{-24 p k \epsilon n}$. Comme les $x_i$ sont dans $\Lambda$, que les $C_{\epsilon}^{\pm 1}$ sont tempérées, et que les dérivées des $\tau_y^{\pm 1}$ sont bornées, on obtient que le volume de $f^n(\psi(L) \cap \tau_{x_i}(Q(x_i)))$ est plus grand que $e^{-25 p k \epsilon n}$. Par suite celui de $f^n(\psi(L) \cap (\cup_{i=1}^{N e^{-12pk \epsilon n}} \tau_{x_i}(Q(x_i))))$ compté avec multiplicité est minoré par $N e^{-40 p k \epsilon n}$ car les $\tau_{x_i}(Q(x_i))$ sont disjoints.

Maintenant, on a $\cup_{i=1}^{N e^{-12pk \epsilon n}} \tau_{x_i}(Q(x_i)) \subset B(x, 2 \eta, n,f)$. En effet, soit $y \in \cup_{i=1}^{N e^{-12pk \epsilon n}} \tau_{x_i}(Q(x_i))$ et $i$ tel que $y \in \tau_{x_i}(Q(x_i))$. Comme $x_i \in B(x,  \eta, n,f)$, on a par le lemme précédent, pour $l=0, \cdots, n-1$

\begin{equation*}
\begin{split}
dist(f^l(y), f^l(x)) &\leq dist(f^l(y), f^l(x_i)) + dist(f^l(x_i), f^l(x))\\
& \leq e^{-2p \epsilon n} + \eta(f^l(x)) \leq 2 \eta(f^l(x)).
\end{split}
\end{equation*}

La dernière inégalité provient du fait que $x \in \mathcal{A}_{n_0}$.

En combinant ce qui précède on a donc que le volume $2u$-dimensionnel de $f^n(\psi(L) \cap B(x, 2 \eta , n ,f))$ est minoré par $N e^{-40 p k \epsilon n}$. L'application $\tau_x$ s'écrit $\psi \circ t$ où $t$ est une translation, donc en posant $Y= t^{-1}(L)$, on a

$$\mbox{Vol}(f^n(\tau_x(Y) \cap B(x, 2 \eta, n ,f))) \geq  N e^{-40 p k \epsilon n}$$

avec $Y$ affine. Autrement dit

$$N=r(n, \gamma, \eta, \Lambda ,x) \leq \mbox{Vol}(f^n(\tau_x(Y) \cap B(x, 2 \eta, n ,f))) e^{40 p k \epsilon n}.$$

Dans le cas où $N < e^{40 p k \epsilon n}$, on a 

$$N=r(n, \gamma, \eta, \Lambda ,x) \leq max(\mbox{Vol}(f^n(\tau_x(Y) \cap B(x, 2 \eta, n ,f))),1) e^{40 p k \epsilon n}.$$

D'où finalement,

\begin{equation*}
\begin{split}
h_{\nu, \eta, loc}(f) - \delta & \leq \inf_{ \{ \Lambda \mbox{, } \nu(\Lambda) \geq \sigma \} } \lim_{\gamma \rightarrow 0} \limsup_{n \rightarrow + \infty} \frac{1}{n} \int_{\Lambda} \log r(n, \gamma, \eta,\Lambda, x) d \nu(x)\\
&\leq \limsup_{n \rightarrow + \infty} \frac{1}{n} \int_{\Lambda} \log r(n, \gamma, \eta,\Lambda, x) d \nu(x) + \delta\\
& \leq \limsup_{n \rightarrow + \infty} \frac{1}{n} \int_{\Lambda} \log^+ \left( \sup_{Y \mbox{affine}} \mbox{Vol}(f^n(\tau_x(Y) \cap B(x, 2 \eta,n,f))) \right) d \nu(x) + \delta + 40pk \epsilon.\\
\end{split}
\end{equation*}

C'est ce que l'on voulait démontrer.

\section{Démonstration du théorème}

Soit $m \in \Nn^*$. On définit comme dans \cite{dTV}

$$\rho_m(x)= \left( \frac{dist(x,I) \times \cdots \times dist(f^{m-1}(x), I)}{K^m} \right)^p.$$

La fonction $\rho_m$ vaut $0$ sur $I$, on peut donc définir pour $\nu \in \mathcal{M}$ la quantité $h_{\nu, \frac{\rho_m}{2} , loc}(f)$ comme au paragraphe \ref{Newhouse}. On a  alors

\begin{theoreme}

Soit $L >0$. Pour tout $\delta > 0$, il existe $m_0$ tel que pour $m \geq m_0$

$$\sup_{ \{ \nu \in \mathcal{M} \mbox{,} \int \log d(x,I) d \nu(x) \geq -L \} } h_{\nu, \frac{\rho_m}{2} , loc}(f) \leq \delta.$$

\end{theoreme}

\begin{proof}

La démonstration de ce théorème repose sur l'inégalité volumique du paragraphe précédent ainsi que la version méromorphe du théorème de Yomdin développée dans la proposition 2.3.2 de \cite{dTV}. On adoptera ici les notations de cette proposition.

Fixons $L>0$ puis $\delta >0$. On prend $r$ tel que $\frac{1}{r} \log K < \delta$ et $\frac{L}{r} < \delta$. Soit $m_0$ tel que $\max_{l=0, \cdots , 2k} \left( \frac{1}{m} \log(C(X,l,r)) \right) < \delta$ pour $m \geq m_0$. On considère $\nu \in \mathcal{M}$ telle que $\int \log d(x,I) d \nu(x) \geq -L$.

Comme $\nu$ est invariante par $f$, la fonction $\log  \frac{\rho_m}{2}$ est intégrable. Si on veut on a $dist(x,I) \geq \frac{\rho_m(x)}{2}$ ce qui implique par le théorème \ref{volume} que

$$h_{\nu, \frac{\rho_m}{2}, loc}(f) \leq \limsup_{n \rightarrow \infty} \frac{1}{n} \int \log^{+} \left( \sup_{Y \mbox{affine}} \mbox{Vol}(f^n(\tau_x(Y) \cap B(x,  \rho_m,n,f) )) \right) d \nu(x).$$

On voit que les boules $B(x,  \rho_m,n,f)$ sont incluses dans celle $B_n(x)$ définies au début du paragraphe 2.3 de \cite{dTV}. En particulier, par la proposition 2.3.2 de \cite{dTV}, le volume de $f^n(\tau_x(Y) \cap B(x,  \rho_m,n,f))$ est majoré par (ici $l$ est la dimension réelle de $Y$)

$$C(X,l,r)^{n/m+2m} \times K^{\frac{2npl}{r} + \frac{4mpl}{r}} \times \prod_{0 \leq i \leq n-1} dist(f^{i}(x),I)^{\frac{-4pl}{r}}.$$

Si on pose $C(X,r)= \max_{l=0, \cdots , 2k} C(X,l,r)$, on a donc

\begin{equation*}
\begin{split}
& \frac{1}{n} \int \log^{+} \left( \sup_{Y \mbox{affine}} \mbox{Vol}(f^n(\tau_x(Y) \cap B(x, \rho_m,n,f))) \right) d \nu(x)\\
&\leq \frac{1}{m} \log ( C(X,r)) + \frac{2pl}{r} \log K - \frac{4kp}{rn} \sum_{i=0}^{n-1} \int \log dist(f^{i}(x),I) d \nu(x) + \frac{C(X,m,r,K,p)}{n}\\
&\leq \delta + 2kp \delta - \frac{4kp}{r} \int \log dist(x,I) d \nu(x) + \frac{C(X,m,r,K,p)}{n}\\
&\leq \delta + 2kp \delta + 4kp \delta + \frac{C(X,m,r,K,p)}{n}.\\
\end{split}
\end{equation*}

On obtient ainsi

$$h_{\nu, \frac{\rho_m}{2}, loc}(f) \leq 7 kp \delta$$

pour tout $\nu \in \mathcal{M}$ telle que $\int \log d(x,I) d \nu(x) \geq -L$.

C'est ce que l'on voulait démontrer.

\end{proof}

Passons maintenant à la démonstration du théorème principal.

\begin{proof}{Démonstration du théorème \ref{th}:}

Soit $(\mu_n)$ une suite de $\mathcal{M}$ qui converge vers $\mu$ telle que

$$(H) \mbox{  :   }  \lim_{n \rightarrow + \infty}
\int \log d(x,I)  d \mu_{n} (x) = \int \log d(x,I)  d \mu(x) > - \infty.$$

Soit $\delta >0$. On prend $L=  - \int \log d(x,I)  d \mu(x) +1$. Par l'hypothèse (H), il existe $n_0$ tel que pour $n \geq n_0$ on ait 

$$\int \log d(x,I)  d \mu_n(x) \geq -L.$$

On peut donc appliquer le théorème précédent à ce $L$ et aux mesures $\mu_n$ pour $n \geq n_0$: on obtient ainsi l'existence de $m_0$ tel que pour $m \geq m_0$

$$\sup_{  n \geq n_0  } h_{\mu_n, \frac{\rho_m}{2} , loc}(f) \leq \delta.$$

Dans la suite on fixe $m \geq m_0$ et on considère $n \geq n_0$. Par le théorème \ref{local}, on a 

$$h_{\mu_n}(f) \leq h_{\mu_n}(f, \Pcal) + h_{\mu_n, \frac{\rho_m}{2}, loc}(f) \leq h_{\mu_n}(f, \Pcal) + \delta.$$

Ici, $\Pcal$ est la partition associée à $\frac{\rho_m}{2}$ et quitte à la bouger un peu on peut supposer que $\mu$ ne charge pas ses bords.

L'hypothèse (H) et l'invariance par $f$ des mesures $\mu_n$ et $\mu$ impliquent que 

$$\lim_{n \rightarrow + \infty} \int \log \frac{\rho_m (x)}{2} d \mu_n(x) = \int \log \frac{\rho_m (x)}{2} d \mu(x) > - \infty.$$

En particulier, par la proposition \ref{semicontinue}, on a 

$$\limsup_{n \rightarrow + \infty} h_{\mu_n}(f) \leq  \limsup_{n \rightarrow + \infty}  h_{\mu_n}(f, \Pcal) + \delta \leq h_{\mu}(f, \Pcal) + \delta \leq  h_{\mu}(f) + \delta.$$

Cela démontre le théorème.

\end{proof}

\newpage

\bigskip

\bigskip\noindent
Henry De Thélin, Université Paris 13, Sorbonne Paris Cité, LAGA, CNRS (UMR 7539), F-93430, Villetaneuse, France.\\
 {\tt dethelin@math.univ-paris13.fr}

\end{document}